\documentclass{amsart}
\usepackage{amsmath}
\usepackage{amssymb}
\usepackage{latexsym}
\usepackage{amsfonts}
\usepackage[dvips]{graphicx}
\usepackage{color}
\usepackage{graphicx,epstopdf}

\newtheorem{theorem}{Theorem}

\newtheorem{prop}{Proposition}
\newtheorem{corollary}{Corollary}

\begin{document}
\title
{Whittaker-Hill equation and semifinite-gap Schr\"odinger operators}%
%
\author{A.D. Hemery}
\address{Department of
Mathematical Sciences,
          Loughborough University,
Loughborough,
          Leicestershire, LE11 3TU, UK}
          \email{A.D.Hemery@lboro.ac.uk}
          
         \author{A.P. Veselov}
\address{Department of
Mathematical Sciences,
          Loughborough University,
Loughborough,
          Leicestershire, LE11 3TU, UK
          and
          Moscow State University, Moscow 119899, Russia}

\email{A.P.Veselov@lboro.ac.uk}

\maketitle
\begin{abstract} A periodic one-dimensional Schr\"odinger operator is called semifinite-gap if every second gap in its spectrum is eventually closed. 
We construct explicit examples of semifinite-gap Schr\"odinger operators in trigonometric functions by applying Darboux transformations to the Whittaker-Hill equation. We give a criterion of the regularity of the corresponding potentials and investigate the spectral properties of the new operators.
\end{abstract}



\medskip

\normalsize




\section{Introduction}
It has been well-known since the work of Floquet and Bloch that the spectrum of the Schr\"odinger operator
$$L=-\frac{d^2}{dx^2}+u(x)$$
with a smooth real periodic potential $u(x+T)\equiv u(x)$ has a band structure. In the 1970s starting with Novikov's famous work \cite{N} a beautiful finite-gap theory, linking the spectral theory with classical algebraic geometry, was developed. It turned out that all operators having only a finite number of gaps in their spectrum can be described explicitly in terms of hyperelliptic Riemann's theta functions \cite{IM}. The simplest one-gap operator is a particular case of the famous Lam\'e operator
$$L=-\frac{d^2}{dx^2}+2\wp(x),$$
where $\wp(x)$ is the Weierstrass elliptic function (shifted by the imaginary half-period to make it non-singular). None of the smooth real periodic finite-gap potentials (except constants)  can be expressed in terms of elementary trigonometric functions.

On the other hand, there are smooth trigonometric potentials, for which every second gap will be eventually closed. We call such potentials {\it semifinite-gap.} A trivial example can be given by any potential, which has a period $T/2$, but is considered as $T$-periodic. One can easily check that for such a potential all the odd gaps are closed. In fact, all the potentials with odd gaps closed are of this form (see Theorem XIII.91 in \cite{RS}).

A more interesting example is given by the so-called {\it Whittaker-Hill} potential
$$u(x)= A \cos 2x + B \cos 4x,$$
with $$A=-4\alpha s, \quad B=-2\alpha^2,$$ 
for any real $\alpha$ and natural $s.$ Namely, it is known (Magnus-Winkler \cite{MW}, Djakov-Mityagin \cite{DM}) that for odd $s=2m+1$ all the even gaps  except the first $m$ are closed and for even $s=2m$ the same is true for odd gaps.

In this paper we construct new explicit trigonometric examples of semifinite-gap potentials by applying the Darboux transformation to the Whittaker-Hill operator. The main issue here is the regularity of the corresponding potentials, which we discuss in detail. 

We note that  all non-constant trigonometric potentials, which are the results of Darboux transformations applied to zero potential, are known to be singular (in contrast to the hyperbolic case, when we have many non-singular multisoliton potentials), so the existence of a large class of regular periodic potentials in elementary functions in the semifinite-gap case seems to be interesting.

We should mention here that for the hyperbolic version of Whittaker--Hill operator with
$u(x) = 4\alpha s \cosh 2x + 2 \alpha^2 \cosh 4x$ the Darboux transformations were considered by Razavy in \cite{R}, but the general regularity question was not addressed in that paper (which is  different in the hyperbolic case anyway). In the framework of quasi-exactly solvable systems the Whittaker-Hill equation has appeared as type X in Turbiner's list \cite{T} and was discussed in details in \cite{FGR}.

\section{Whittaker--Hill equation}
The Whittaker--Hill equation
\begin{equation} \label{W-H}
-\psi'' -(4\alpha s \cos2x + 2\alpha^2\cos4x)\psi = \lambda \psi,
\end{equation}
is an eigenvalue problem for the following $\pi$-periodic Schr\"odinger operator
\begin{equation}
\label{WHo}
L = -D^2- (4\alpha s \cos2x + 2\alpha^2 \cos4x),
\end{equation}
where $D=\frac{d}{dx}.$
It depends on two real parameters $\alpha$ and $s$. Change of the sign of $\alpha$ is equivalent to the shift $x\rightarrow x+\pi/2$ and to the change of sign of $s$, so we can assume without loss of generality that $s\geq 0, \, \alpha \ge 0$. When $\alpha=0$ we have the zero potential. 

This equation appeared in the work of Liapunov \cite{L} and was studied extensively by Whittaker \cite{Whit} (see also Ince \cite{Ince}). Here we follow the terminology of the book \cite{MW}
by Magnus and Winkler. The material of this section is mainly based on this book and on the recent paper  \cite{DM} by Djakov and Mityagin.
 
The substitution
\begin{equation} \label{trans} 
\psi = \varphi e^{\alpha\cos2x}
\end{equation} 
carries \eqref{W-H} into
\begin{equation} \label{transformed W-H}
\varphi '' - 4\alpha \sin2x\varphi ' + [\lambda + 2\alpha^2 + 4(s-1)\alpha \cos2x]\varphi  = 0.
\end{equation}
It can be rewritten as
\begin{equation}
K\varphi  = \nu \varphi ,
\end{equation}
where 
\begin{equation}\label{K}
K  = 
-D^2 + 4\alpha(\sin2x)D-4(s-1)\alpha \cos2x 
\end{equation}
 and $\nu = \lambda + 2\alpha^2$. 

Consider $\pi$-periodic solutions to \eqref{transformed W-H} of the form
\begin{equation} \label{periodicform}
\varphi _{even} = A_0 + \sum_{n=1}^{\infty} A_{2n} \cos2nx, \quad \varphi _{odd} = \sum_{n=1}^{\infty} B_{2n} \sin2nx.
\end{equation}
Substituting \eqref{periodicform} into \eqref{transformed W-H} we end up with the following recurrence relations
\begin{equation}
\nu A_0 + 2\alpha (s+1) A_2 = 0 
\end{equation}
\begin{equation} \label{4eqn}
4\alpha (s-1) A_0 + (\nu - 4) A_2 + 2\alpha (s+3) A_4 = 0 
\end{equation}
\begin{equation} \label{even}
2\alpha (s-2k+1) A_{2k-2} + (\nu - 4k^2) A_{2k} + 2 \alpha (s+2k+1) A_{2k+2} 	=0 \; \;	(k \geq 2)
\end{equation} \\
in the even case and 
\begin{equation}
(\nu - 4) B_2 + 2\alpha (s+3) B_4 = 0 
\end{equation}
\begin{equation} \label{odd}
2\alpha (s-2k+1) B_{2k-2} + (\nu - 4k^2) B_{2k} + 2 \alpha (s+2k+1) B_{2k+2} 	=0 \; \;	(k \geq 2)
\end{equation}
in the odd case. Similarly, for anti-periodic solutions
\begin{equation} \label{antiperiodicform}
\varphi _{even} = \sum_{n=0}^{\infty}C_{2n+1} \cos(2n+1)x, \quad \varphi _{odd} = \sum_{n=1}^{\infty} D_{2n+1} \sin(2n+1)x
\end{equation}
we have the following recurrence relations:
\begin{equation}\label{anti-even1}
(\nu + 2\alpha s - 1 ) C_1 + 2\alpha (s+2) C_3 = 0 
\end{equation}
\begin{equation} \label{anti-even}
2\alpha (s-2k+2) C_{2k-3} + (\nu - (2k-1)^2) C_{2k-1} + 2 \alpha (s+2k) C_{2k+1} 	=0 \; \;	(k \geq 2)
\end{equation} 
and 
\begin{equation} \label{anti-odd1}
(\nu - 2\alpha s - 1) D_1 + 2\alpha (s+2) D_3 = 0 
\end{equation}
\begin{equation} \label{anti-odd}
2\alpha (s-2k+2) D_{2k-3} + (\nu - (2k-1)^2) D_{2k-1} + 2 \alpha (s+2k) D_{2k+1} 	=0 \; \;	(k \geq 2).
\end{equation} 

We will start first with  the periodic case. The following proposition \cite{MW,DM} explains the special role of the integer parameters $s$ in the theory of the Whittaker-Hill equation.

\begin{prop} \label{termination}
Suppose that for some $\lambda$ the equation (\ref{transformed W-H}) has two non-zero solutions  (\ref{periodicform}) with $A_{2n}, B_{2n}$ decaying  exponentially fast as $n \to \infty.$ Then $s$ must be an odd integer. 
\end{prop}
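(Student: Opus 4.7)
The plan is to exploit the fact that both sequences $\{A_{2n}\}_{n\geq 1}$ and $\{B_{2n}\}_{n\geq 1}$ satisfy the \emph{same} three-term recurrence \eqref{even}/\eqref{odd} from $k=2$ onward, which under the decay hypothesis forces them to be proportional, and then to extract a contradiction from the boundary relations at $k=0$ and $k=1$ unless $s$ is an odd integer.

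Writing $a(k)=2\alpha(s-2k+1)$, $b(k)=\nu-4k^2$, and $c(k)=2\alpha(s+2k+1)$, a Poincar\'e--Perron asymptotic analysis of the recurrence shows that, as $k\to\infty$, one linearly independent solution grows like $k!/\alpha^{k}$ and the other decays like $(-\alpha)^{k}/k!$; hence any exponentially decaying sequence must be a scalar multiple of the minimal (Pincherle) solution. Because $s\geq 0$ and $\alpha>0$, the coefficient $c(k)$ never vanishes, while $a(k)=0$ if and only if $s=2k-1$ is a positive odd integer. Assume for contradiction that $s$ is \emph{not} an odd integer: then $a(k)$ is nowhere zero, the backward recursion $X_{k-1}=-[b(k)X_k+c(k)X_{k+1}]/a(k)$ is everywhere invertible, and the minimal solution is unique up to scalar. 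After rescaling we may therefore assume $A_{2n}=B_{2n}$ for every $n\geq 1$.

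I would now compare three boundary relations: the $k=0$ even equation $\nu A_0+2\alpha(s+1)A_2=0$, the $k=1$ even equation \eqref{4eqn}, and the $k=1$ odd equation $(\nu-4)B_2+2\alpha(s+3)B_4=0$. Subtracting the odd $k=1$ equation from the even $k=1$ equation, using $A_2=B_2$ and $A_4=B_4$, collapses to $4\alpha(s-1)A_0=0$, so either $s=1$ (which is itself an odd integer, contradicting the assumption) or $A_0=0$. In the latter case the $k=0$ even equation together with $s+1>0$ forces $A_2=0$, and then a short forward induction through the recurrence — dividing only by $c(k)\neq 0$ at each step — yields $A_{2n}=B_{2n}=0$ for all $n$, contradicting the non-triviality of $\varphi_{even}$ and $\varphi_{odd}$.

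The main pressure point is the uniqueness of the minimal solution, which rests squarely on $a(k)\neq 0$; this is exactly what fails when $s$ is an odd positive integer, in which case the recurrence decouples at $k=(s+1)/2$ and the decaying subspace can accommodate genuinely independent symmetric and antisymmetric solutions — precisely the situation the proposition leaves open.
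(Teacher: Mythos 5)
Your argument is correct in substance, and it is in fact the same argument as the paper's seen from a different angle, with the one nontrivial step supplied by a different tool. Both proofs hinge on the observation that the tails $(A_{2n})_{n\ge 1}$ and $(B_{2n})_{n\ge 1}$ satisfy the identical three-term recurrence \eqref{even}, \eqref{odd} for $k\ge 2$ and must be proportional unless some coefficient $a_k=2\alpha(s-2k+1)$ vanishes, i.e.\ unless $s$ is odd. You obtain proportionality from Pincherle/Perron theory (uniqueness of the minimal solution once the transfer matrices, whose determinants are $a_k/c_k$, are all invertible); the paper obtains it by introducing the Casoratian $\Delta_k=A_{2k-2}B_{2k}-A_{2k}B_{2k-2}$, which by a discrete Abel identity satisfies the \emph{first-order} relation $a_k\Delta_k=c_k\Delta_{k+1}$, and then showing via Gamma-function/Stirling estimates that the product $\prod c_j/a_j$ grows only polynomially, so exponential decay forces either $\Delta_k\equiv 0$ (proportionality) or the vanishing of one factor $a_k$ ($s$ odd) — both branches in one stroke. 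Your endgame with the boundary relations (subtracting the odd $k=1$ equation from \eqref{4eqn} to get $4\alpha(s-1)A_0=0$, then killing the whole even sequence) is a more explicit version of the paper's one-line claim that $\Delta_k\equiv 0$ forces one of $\varphi_{even},\varphi_{odd}$ to vanish, and you handle the $s=1$ case more carefully than the paper does.

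The one soft spot is the step you yourself flag as the pressure point: the assertion that an exponentially decaying solution is necessarily a multiple of a unique minimal solution. The classical Poincar\'e--Perron theorem does not apply directly here because the coefficients are unbounded ($b(k)\sim -4k^2$ while $a(k),c(k)\sim \mp 4\alpha k$); you need the Perron--Kreuser refinement to guarantee the existence of a dominant solution with $X_{k+1}/X_k\sim k/\alpha$, and then the invertibility of the transfer matrices (your $a(k)\ne 0$ condition) to conclude that the non-dominant, hence decaying, subspace is at most one-dimensional. This is all true and citable, but as written it is asserted rather than proved; the paper's Casoratian computation is precisely the elementary, self-contained substitute for this black box, which is why I would regard the two proofs as the same argument with different machinery at the crux rather than genuinely independent routes.
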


\begin{proof}

We have from \eqref{even} and \eqref{odd} for $k \geq 2$
\begin{eqnarray*}
a_k A_{2k-2} + b_k A_{2k} + c_k A_{2k+2} = 0, \\ 
a_k B_{2k-2} + b_k B_{2k} + c_k B_{2k+2} = 0,
\end{eqnarray*}
where $a_k = 2\alpha (s - 2k +1)$, $b_k = \nu - k^2$ and $c_k = 2\alpha (s+2k+1)$. By multiplying the first equation by $B_{2k}$ and the second by $A_{2k}$ and subtracting, we have
$$
a_k \left|
\begin{array}{cc}
A_{2k-2}	&	A_{2k}	\\
B_{2k-2}	&	B_{2k}
\end{array}
\right|
- c_k \left|
\begin{array}{cc}
A_{2k}	&	A_{2k+2}	\\
B_{2k}	&	B_{2k+2}
\end{array}
\right| = 0.
$$
Introducing $\Delta_k = \left| \begin{array}{cc} A_{2k-2}	&	A_{2k} \\ B_{2k-2}	&	B_{2k} \end{array} \right|$ we have
$$
a_k \Delta_k = c_k \Delta_{k+1}
$$
and hence
\begin{eqnarray*}
\Delta_k	&	=	&	\frac{c_k c_{k+1} \dots c_{k+r}}{a_k a_{k+1} \dots a_{k+r}} \Delta_{k+r+1} \\
		&	=	&	\frac{(s+2k+1)(s+2k+3)\dots(s+2k+2r+1)}{(s-2k+1)(s-2k-1)\dots(s-2k-2r+1)}\Delta_{k+r+1}\\
		&	=	&	\frac{2^{r+1}}{(-2)^{r+1}}\frac{(k+s/2+1/2)(k+s/2+3/2)\dots(k+s/2+r+1/2)}{(k-s/2-1/2)(k-s/2+1/2)\dots(k-s/2+r-1/2)}\Delta_{k+r+1} \\
		&	=	&	(-1)^{r+1} \frac{\Gamma(k+s/2+r+3/2)\Gamma(k-s/2-1/2)}{\Gamma(k+s/2+1/2)\Gamma(k-s/2+r+1/2)} \Delta_{k+r+1},
\end{eqnarray*}
where $\Gamma$ denotes the classical Gamma function \cite{WW} and we have used the functional relation $$\frac{\Gamma(z+r+1)}{\Gamma(z)}=z(z+1)\dots(z+r).$$ 
We rewrite this as
\begin{equation*}
\frac{\Gamma(k+s/2+1/2)}{\Gamma(k-s/2-1/2)} \Delta_{k} = (-1)^{r+1} \frac{\Gamma(k+s/2+r+3/2)}{\Gamma(k-s/2+r+1/2)} \Delta_{k+r+1}. \label{r-equation}
\end{equation*}
Now from Stirling's formula \cite{WW} we have the asymptotic relation
$$
\lim_{t \to +\infty} \left| \frac{\Gamma(t+\rho)}{\Gamma(t-\rho)} t^{-2\rho} \right| = 1
$$
for any fixed $\rho$. Since $A_{2n}$ and $B_{2n}$ decay exponentially fast, so does $\Delta_n$. By setting $t=k+r+1$, $\rho = s/2+1/2$, and letting $r \to \infty$ in \eqref{r-equation} we see that the right hand side goes to zero, which implies that the left hand side (which is independent of $r$) must be zero:
$$
\frac{\Gamma(k+s/2+1/2)}{\Gamma(k-s/2-1/2)}\Delta_k = 0,
$$
or, more explicitly
$$
(k-s/2-1/2)(k-s/2+1/2)\dots(k+s/2-1/2)(k+s/2+1/2) \Delta_k = 0
$$
Hence, we either have $\Delta_k = 0$ for all $k\geq 2,$ or one of the brackets must vanish. 
In the first case it is easy to see that either $\varphi_{even}$ or $\varphi_{odd}$ must be  identically equal to zero. In the second case $s$ is an odd integer. 
\end{proof}

Let now $s=2m+1$ be a positive  odd integer. In that case $a_{m+1} = 2\alpha (s - 2m -1)=0,$ so the infinite tri-diagonal matrices corresponding to the relations 
\eqref{even} and \eqref{odd} become reducible. Introduce the corresponding finite-dimensional parts, corresponding to the terminating sequences $A_{2n}=B_{2n}=0$ for $n > m:$
\begin{equation}
\label{ev}
K^{0}_m = 
\begin{pmatrix}
		b_0	&	c_0	&	0	&	\dots		&	\dots	&	0 	\\
		a_1	&	b_1	&	c_1	&	0		&	\dots	&	0	\\
		0	&	a_2	&	b_2	&	c_2		&	\dots	&	0	\\
		\vdots & \ddots	& \ddots 	& 	\ddots	&	\ddots	&	\vdots	\\
		0	&	\dots	&	0	&	a_{m-1}	&	b_{m-1} & c_{m-1} \\
		0	&	\dots & \dots	&	0		&	a_{m}	&	b_{m} \\
\end{pmatrix}
\end{equation}
and
\begin{equation}
\label{od}
K^{1}_m = 
\begin{pmatrix}
		b_1	&	c_1	&	0	&	\dots		&	\dots	&	0 	\\
		a_2	&	b_2	&	c_2	&	0		&	\dots	&	0	\\
		0	&	a_3	&	b_3	&	c_3		&	\dots	&	0	\\
		\vdots & \ddots	& \ddots 	& 	\ddots	&	\ddots	&	\vdots	\\
		0	&	\dots	&	0	&	a_{m-1}	&	b_{m-1} & c_{m-1} \\
		0	&	\dots & \dots	&	0		&	a_{m}	&	b_{m} \\
\end{pmatrix}
\end{equation}
where  $b_k = \nu - 4k^2$, $c_k = 2\alpha (s+2k+1)$ for all $k$, $a_k = 2\alpha (s - 2k +1)$
for $k\geq 2$ and $a_1 = 4\alpha(s-1).$

Let 
$$\delta^0_m(\nu,\alpha) = \det K^0_m, \quad \delta^1_m(\nu,\alpha) = \det K^1_m$$
and $$S_0(\alpha)=\{\nu: \delta^0_m(\nu, \alpha)=0\}, \quad S_1(\alpha)=\{\nu: \delta^1_m(\nu, \alpha)=0\}$$
be the corresponding eigenvalues. 

Since $\alpha>0$ all the off-diagonal entries of the matrices $K^0_m$ and $K^1_m$ are positive. From the general theory of tri-diagonal (Jacobi) matrices (see e.g. \cite{Sze}) it follows that all the eigenvalues $\nu$ are real and distinct. Moreover, these sets $S_0$ and $S_1$ are {\it interlacing}: between any two zeroes of $S_0$ there is exactly one zero of $S_1.$ Although this fact is well-known for the convenience of the reader we give the  proof in the Appendix 1. 

{\it Remark.} In modern terminology $S=S_0 \cup S_1$ corresponds to the {\it solvable} part of the spectrum of the Whittaker--Hill operator, which in the case when $s$ is an integer belongs to the class of {\it quasi-exactly solvable} operators \cite{T,Ush}. In the theory of such operators the polynomials $\delta^0_m$ are sometimes called {\it Bender-Dunne} polynomials \cite{BD}.

A crucial observation due to Magnus-Winkler \cite {MW} (see also \cite{DM})  is that the solvable part coincides with the simple part of the periodic spectrum. More precisely, we have the following 

\begin{prop} \label{multiplicity}
Let $s = 2m +1$, $m \in \mathbb{Z}_+.$ If $\nu$ belongs to the periodic spectrum of the Whittaker-Hill equation and has multiplicity $1,$ then $\nu \in S.$
\end{prop}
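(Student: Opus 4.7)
The plan is to exploit the $x \mapsto -x$ symmetry of equation (\ref{transformed W-H}) together with the decoupling of the recurrences (\ref{even}), (\ref{odd}) at the critical index $k = m+1$. The ODE is invariant under parity reflection, so the space of $\pi$-periodic solutions splits into an even and an odd subspace of the forms given in (\ref{periodicform}), each at most one-dimensional. Thus multiplicity $1$ of $\nu$ means that exactly one parity contributes; without loss of generality I assume the even $\pi$-periodic solution $\varphi_{\mathrm{even}}$ exists with exponentially decaying Fourier coefficients $\{A_{2n}\}$, while no odd $\pi$-periodic solution exists. The goal is then to conclude $\nu \in S_0$; the symmetric case yields $\nu \in S_1$.

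The \emph{key observation} is that for $s = 2m+1$ the coefficient $a_{m+1} = 2\alpha(s - 2m - 1)$ vanishes. Hence the recurrence (\ref{even}) at level $k = m+1$ reads $b_{m+1} A_{2m+2} + c_{m+1} A_{2m+4} = 0$ and involves no $A_{2n}$ with $n \leq m$, splitting the infinite system into a finite \emph{head} block $K^0_m$ on $(A_0, \dots, A_{2m})$, coupled to the rest only via the single term $c_m A_{2m+2}$ in the $k = m$ equation, and a self-contained three-term \emph{tail} recurrence on $(A_{2m+2}, A_{2m+4}, \dots)$. Crucially, comparing (\ref{even}) with (\ref{odd}) one sees that this tail recurrence agrees term-by-term with the analogous tail recurrence for any odd Fourier series $\sum B_{2n} \sin 2nx$, because the formulas for $a_k, b_k, c_k$ at $k \geq 2$ do not distinguish the two parities.

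If $A_{2m+2} = 0$ the tail vanishes identically and the head $(A_0, \dots, A_{2m})^T$ is a non-trivial null vector of $K^0_m$, so $\nu \in S_0 \subset S$. The crux is to exclude the remaining case $A_{2m+2} \neq 0$ by producing a second, odd $\pi$-periodic solution. Define $B_{2n} := A_{2n}$ for $n > m$; by the previous paragraph this is an exponentially decaying solution of the odd tail recurrence. If $\nu \in S_1$, a null vector of $K^1_m$ already provides a finite-sum odd $\pi$-periodic solution, contradicting multiplicity~$1$. If $\nu \notin S_1$, then $K^1_m$ is invertible and the inhomogeneous head system $K^1_m (B_2, \dots, B_{2m})^T = -(0, \dots, 0, c_m A_{2m+2})^T$ has a unique solution; combining it with the decaying tail produces a genuine smooth non-zero odd $\pi$-periodic solution $\varphi_{\mathrm{odd}}$, again contradicting multiplicity~$1$. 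The main point to verify carefully is the matching of the two tail recurrences across parities, which is precisely what the vanishing $a_{m+1}=0$ secures; once this is in hand the rest is just linear algebra in the finite head blocks.
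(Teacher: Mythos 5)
Your proposal is correct and follows essentially the same route as the paper: the vanishing of $a_{m+1}$ decouples the tail, the tail recurrences for the two parities coincide for $k\geq 2$, and if $A_{2m+2}\neq 0$ one inverts $K^1_m$ to build a second, odd periodic solution, contradicting simplicity. The only cosmetic difference is that you argue from multiplicity $1$ forward (adding the easy subcase $\nu\in S_1$), whereas the paper assumes $\nu\notin S$ and derives multiplicity $2$; these are the same argument in contrapositive form.
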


\begin{proof}
Let us assume that $\nu \notin S$ and that we have a non-zero even periodic solution of the form
$$
\varphi = A_0 + \sum_{k=1}^{\infty} A_{2k} \cos 2kx.
$$
Since $\nu \notin S$ the coefficient $A_{2m+2} \neq 0$ (otherwise, we have $A_{2k} = 0$ for all $k > m$ and thus $\nu \in S_0$). We now construct a second, odd periodic solution for the same $\nu.$ First we set $B_{2n} = A_{2n}$ for $n > m$, which is OK since the recurrence relations \eqref{even} and \eqref{odd} agree for $k \geq 2.$ Now, since $\nu \notin S_1$ the $m\times m$ matrix $K^1_0$ is invertible. This means that we can reconstruct uniquely the beginning of the sequence 
$B=(B_2, B_4, \dots, B_{2m})$ as
\begin{equation}
B = (K^1_m)^{-1} 
\left(
\begin{array}{c}
0 \\
\vdots \\
0 \\
-c_m A_{2m+2}
\end{array}
\right)
\end{equation}
This means that we have independent periodic solutions with the same $\nu$, so $\nu$ has multiplicity $2$. Contradiction means that $\nu \in S_0$ in that case. A similar argument in the case of odd periodic solution shows that $\nu \in S_1.$
\end{proof}

Similar results are true for even $s=2m$ and anti-periodic spectrum. From  (\ref{anti-even1}),(\ref{anti-even}),(\ref{anti-odd1}),(\ref{anti-odd}) the corresponding eigenvalues coincide with the eigenvalues of the following matrices:  
\begin{equation}
\label{sev0}
K^{\pm}_m = 
\begin{pmatrix}
		b_1^{\pm}	&	c_1	&	0	&	\dots		&	\dots	&	0 	\\
		a_2	&	b_2	&	c_2	&	0		&	\dots	&	0	\\
		0	&	a_3	&	b_3	&	c_3		&	\dots	&	0	\\
		\vdots & \ddots	& \ddots 	& 	\ddots	&	\ddots	&	\vdots	\\
		0	&	\dots	&	0	&	a_{m-1}	&	b_{m-1} & c_{m-1} \\
		0	&	\dots & \dots	&	0		&	a_{m}	&	b_{m} \\
\end{pmatrix}
\end{equation}
where   $c_k = 2\alpha (s+2k),$ $a_k = 2\alpha (s - 2k+2)$for all $k$, 
$b_k= \nu-(2k-1)^2$ for $k\geq 2$ and $b_1^{\pm} = \nu-1 \pm 2\alpha s.$
The corresponding sets $S^{\pm}$ are also interlacing (see Appendix 1). We denote all the eigenvalues from the set $S= S^+ \cup S^-$ in increasing order as $\nu_1, \dots, \nu_{2m}.$

The degeneracy of the eigenvalues $\nu \notin S$ is called the {\it coexistence} property in \cite{MW}.
This leads to the following spectral property of the Whittaker--Hill operator. For the general theory of periodic Schr\"odinger operators we refer to the classical Reed-Simon book \cite{RS}.

Let us choose a basis $\psi_1(x), \, \psi_2(x)$ of the solution space of the Whittaker--Hill equation (\ref{W-H}) and define the corresponding {\it monodromy matrix} by
\begin{equation*}
\left( \begin{array}{c}
\psi_1(x+\pi) \\
\psi_2(x+\pi)
\end{array} \right)
=
M(\lambda)
\left( \begin{array}{c}
\psi_1(x) \\
\psi_2(x)
\end{array} \right).
\end{equation*}
Taking wronskians of both sides we see
 that $\det M(\lambda) = 1.$
 The trace of the monodromy matrix $\Delta(\lambda)=tr M(\lambda)$  is independent of the choice of the solutions $\psi_1, \, \psi_2$ and sometimes is called {\it Hill's discriminant} (in Russian literature the term {\it Liapunov function} is also used).
The {\it Floquet multipliers} $\mu_1$,$\mu_2$ are the eigenvalues of $M(\lambda),$ which satisfy the characteristic equation
\begin{equation*}
\mu^2 - \Delta(\lambda)\mu + 1 = 0.
\end{equation*}
The continuous spectrum of $L$ consists of the segments of $\lambda \in \mathbb R$ such that corresponding solutions are bounded on the real line. This happens when $|\mu_i|=1,$ or equivalently
$$|\Delta(\lambda)|\leq 2.$$
The complement is the union of intervals called {\it gaps}. They correspond to the part of graph of $\Delta(\lambda)$ lying outside of the spectral strip $-2 \leq \Delta \leq 2.$

In general (for example, for the Mathieu operator with $u=A \, \cos \, 2x$) this graph intersects each line $\Delta = \pm 2$ during each wave of oscillation (see Fig. 1, left). In the finite-gap case starting from some point the graph starts to touch these lines but not intersect them (see the one-gap case $u(x) = 2\wp (x)$ on Fig.1, right).

In the semifinite-gap case this happens only for one of these lines. For example, for the Whittaker--Hill operator  the graph eventually touches the periodic line $\Delta=2$ for odd $s$ and  the anti-periodic line $\Delta=-2$ for even $s$ (see Fig.2). The case of small $\alpha$ (see below) shows that the open gaps are precisely the first ones; by continuity arguments this is true for all real $\alpha.$

\begin{figure}
\centerline{ \includegraphics[width=6cm]{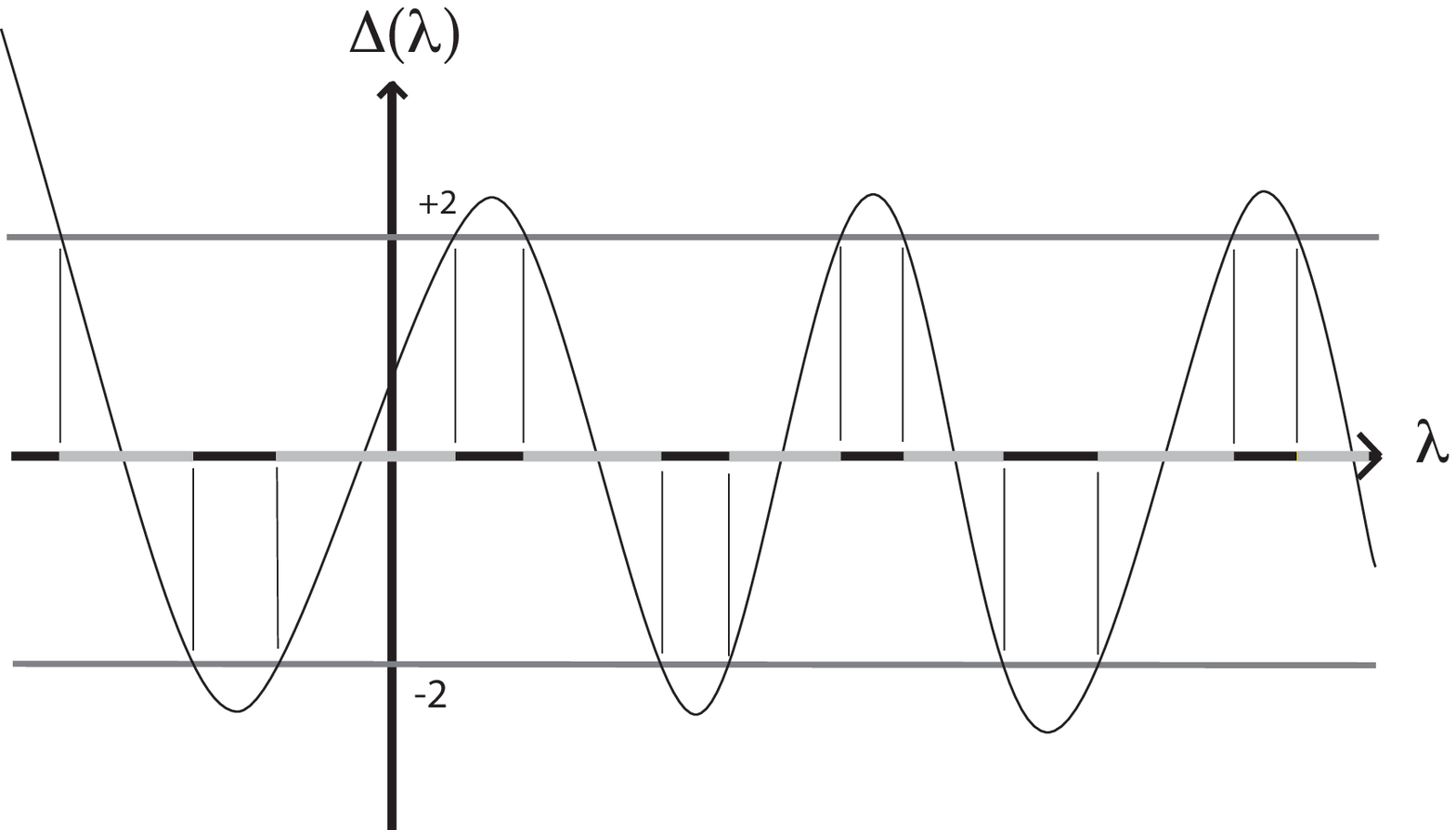} \hspace{10pt} \includegraphics[width=6cm]{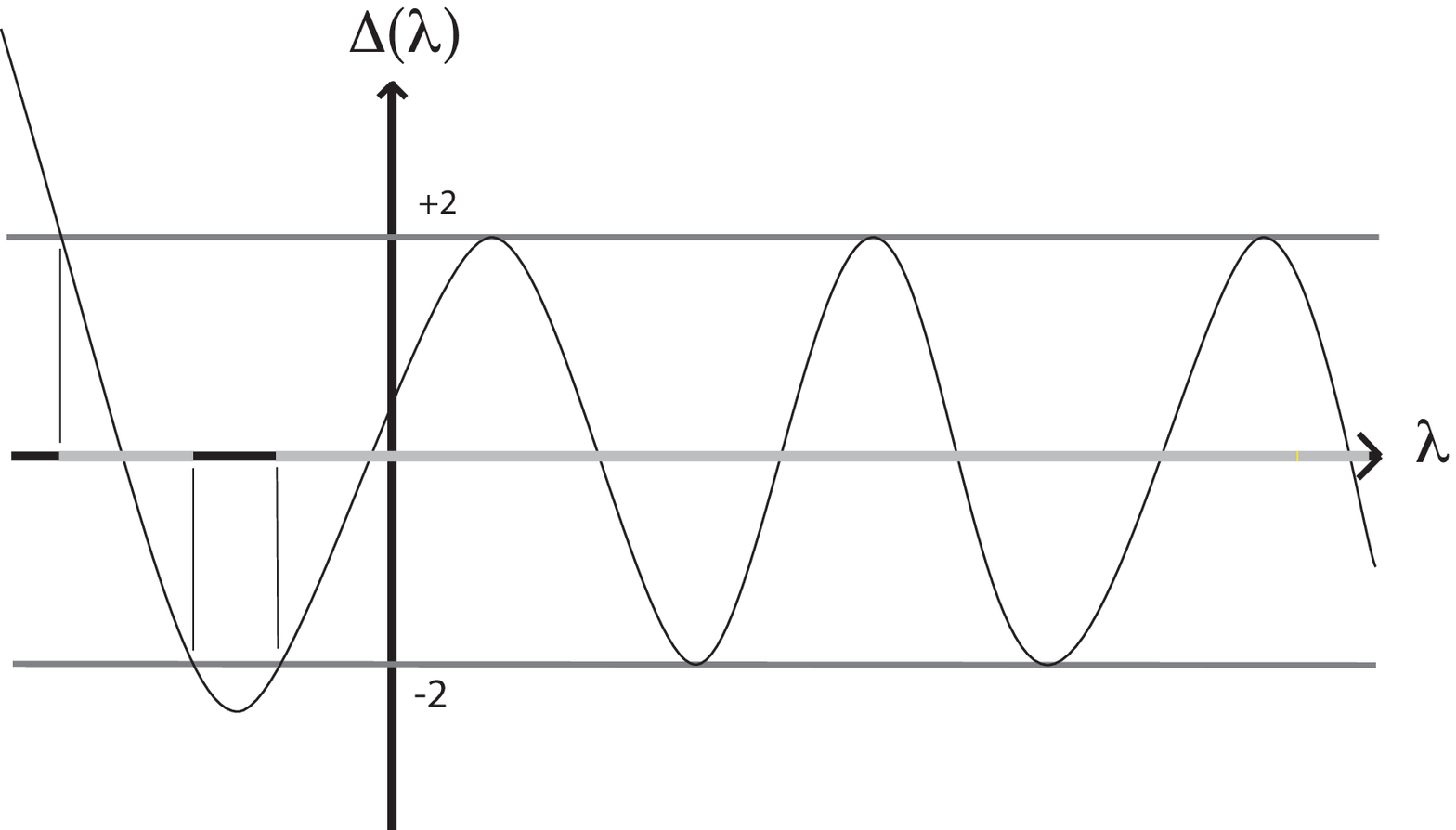} }
\caption{Graphs of the Hill's discriminant. Left: generic periodic case. Right: one gap case} \label{generic}
\end{figure}

\begin{figure}
\centerline{ \includegraphics[width=6cm]{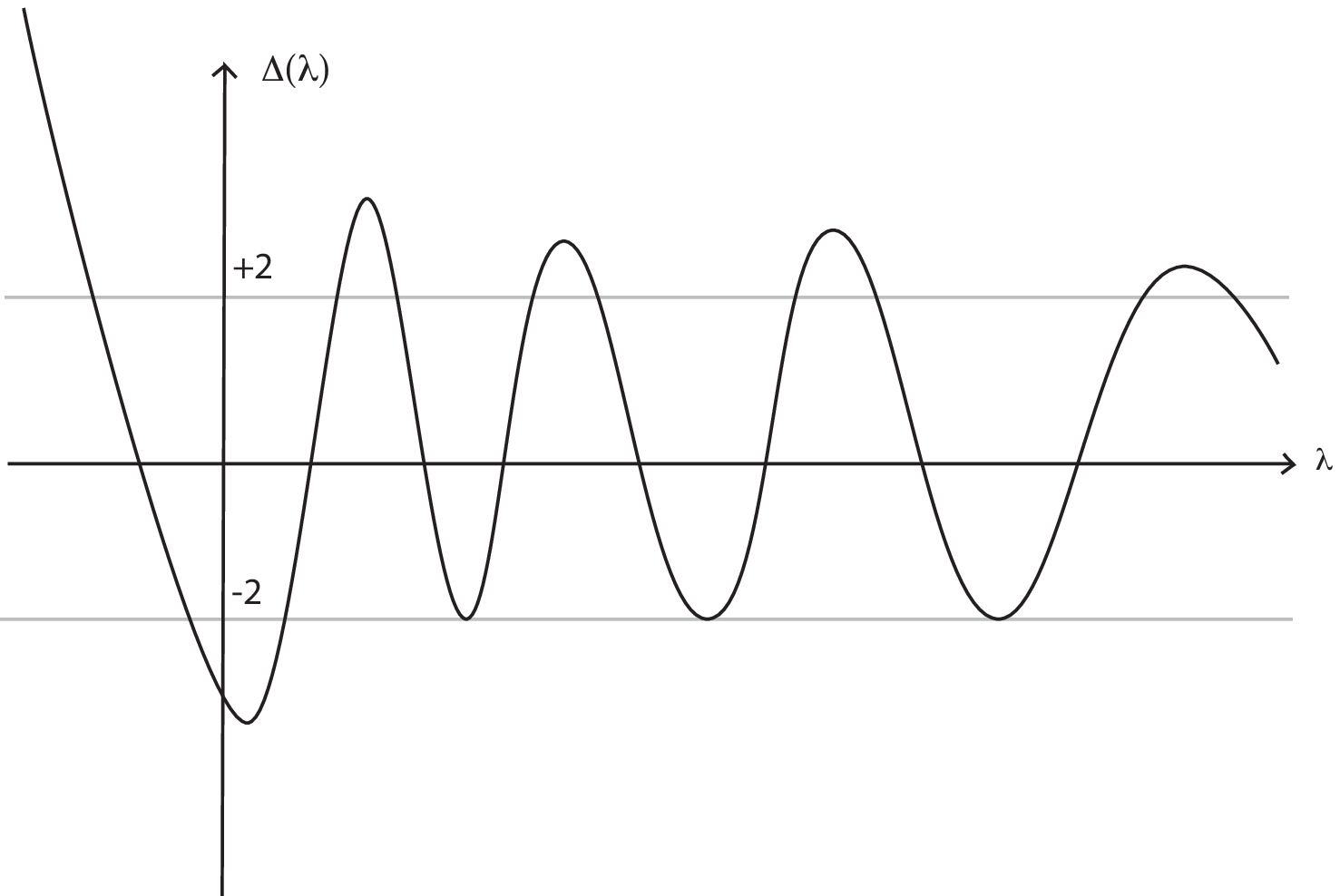} \hspace{10pt} \includegraphics[width=6cm]{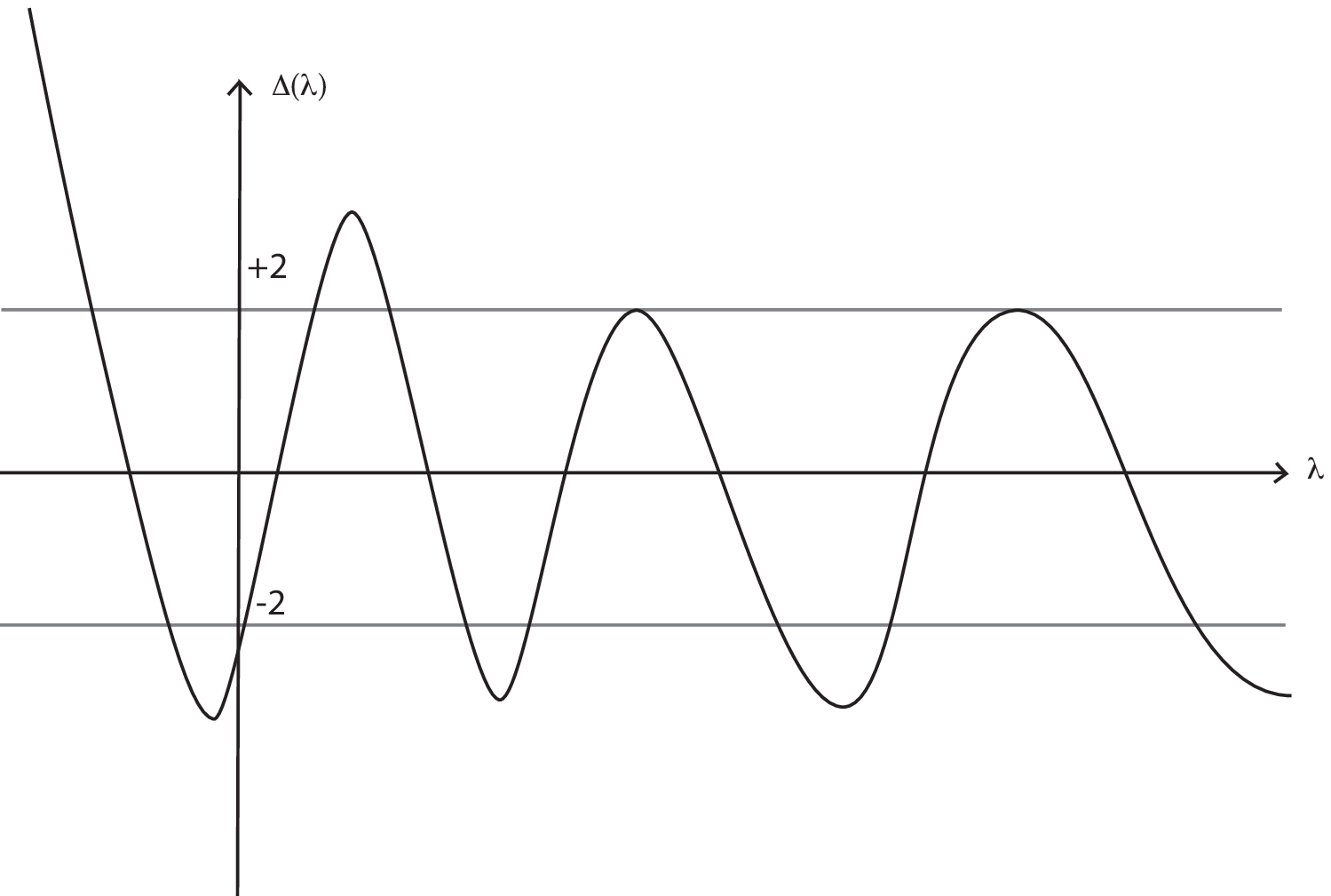} }
\caption{Hill's discriminant in the semifinite-gap case. Left: $s=2$. Right: $s=3.$} \label{semi}
\end{figure}

We summarize the results of this section as follows.
\begin{theorem} \cite{MW, DM} \label{thm}
The Whittaker-Hill operator (\ref{WHo})  has all even gaps closed except the first $m$ when $s=2m+1$ and all odd gaps closed except the first $m$ when $s=2m.$
\end{theorem}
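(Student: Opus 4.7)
The plan is to combine Proposition \ref{multiplicity} (which bounds the number of open gaps) with a continuity argument in $\alpha$ (which identifies which are open). I treat the case $s=2m+1$; the case $s=2m$ is analogous, working with the anti-periodic spectrum and the matrices $K^{\pm}_m$ in place of $K^0_m, K^1_m$.

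For the upper bound, both $K^0_m$ and $K^1_m$ are tri-diagonal Jacobi matrices with positive off-diagonal entries, of sizes $m+1$ and $m$ respectively, so $S_0$ and $S_1$ are real and simple, and interlacing (Appendix 1) gives $S_0 \cap S_1 = \emptyset$, whence $|S| = 2m+1$. By Proposition \ref{multiplicity}, every simple periodic eigenvalue of $L$ lies in $S$. Standard Floquet theory \cite{RS} orders the periodic spectrum as $\lambda_0<\lambda_1\le\lambda_2<\lambda_3\le\lambda_4<\cdots$, where the $k$-th even gap $(\lambda_{2k-1},\lambda_{2k})$ is open iff both endpoints are simple and $\lambda_0$ is always simple; counting, if $N$ is the number of open even gaps then $1+2N\le|S|=2m+1$, so $N\le m$.

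To identify the open gaps as the \emph{first} $m$, I would pass to the small-$\alpha$ limit. At $\alpha=0$ the periodic eigenvalues are $(2k)^2$, doubly degenerate for $k\ge 1$. A standard perturbation calculation (cf.\ \cite{MW,DM}) on each two-dimensional degenerate subspace shows that for $s=2m+1$ the splitting at level $(2k)^2$ is of order $\alpha^{2k}$ with a leading coefficient vanishing precisely when $k\ge m+1$; this ultimately reflects the vanishing $a_{m+1}=2\alpha(s-2m-1)=0$, which decouples the high modes from the low ones in the recurrences \eqref{even},\eqref{odd}. Hence for small $\alpha$ the first $m$ even gaps open and all higher ones close. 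The uniform bound $N\le m$, combined with analyticity of the Hill discriminant $\Delta(\lambda,\alpha)$ in $\alpha$, then extends this picture to all $\alpha>0$: a new gap cannot open without violating the bound, and a gap that is closed for small $\alpha$ cannot reopen later since the merged pair of eigenvalues depends analytically on $\alpha$.

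The main technical obstacle is the small-$\alpha$ splitting computation — specifically, the vanishing of the leading coefficient at level $k\ge m+1$ — which hinges on the exact balance $A=-4\alpha s,\ B=-2\alpha^2$ of the two harmonics and is precisely what distinguishes the Whittaker--Hill equation from a generic two-harmonic Hill operator.
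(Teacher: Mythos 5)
Your upper bound is exactly the paper's argument: interlacing makes the $2m+1$ elements of $S=S_0\cup S_1$ pairwise distinct, Proposition \ref{multiplicity} forces every simple periodic eigenvalue into $S$, and the count $1+2N\le 2m+1$ caps the number of open even gaps at $m$. The divergence --- and the genuine gap --- is in the identification step. The perturbative computation you propose (that the splitting of the degenerate level $(2k)^2$ is of order $\alpha^{2k}$ with leading coefficient nonvanishing exactly for $k\le m$) is the hard part of the Djakov--Mityagin asymptotic analysis: it requires summing contributions over all Fourier-lattice paths built from the two harmonics $\cos 2x$ and $\cos 4x$ and showing a nontrivial cancellation, and you do not carry it out. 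It is also unnecessary given the machinery already in place. The paper reads off the identification from Section 3: the solvable eigenvalues $\nu_0,\dots,\nu_{2m}$ are eigenvalues of explicit Jacobi matrices depending continuously on $\alpha$, and as $\alpha\to 0$ they tend to $0,4,4,16,16,\dots,4m^2,4m^2$ with eigenfunctions tending to $1,\sin 2x,\cos 2x,\dots,\sin 2mx,\cos 2mx$. Since for $\alpha>0$ they are pairwise distinct (simplicity of Jacobi spectra plus the interlacing of $S_0$ and $S_1$ from Appendix 1) and are genuine periodic eigenvalues (the recurrences terminate), for small $\alpha$ they are precisely the lowest $2m+1$ periodic eigenvalues and all simple; hence the first $m$ even gaps are open, and your own upper bound then closes all the rest. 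Only the nonvanishing for $k\le m$ is needed in your scheme anyway --- the vanishing for $k\ge m+1$ is already implied by the bound $N\le m$.

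Your continuation to all $\alpha$ is also somewhat loose: analyticity of $\Delta(\lambda,\alpha)$ does not by itself prevent a gap that is closed for small $\alpha$ from reopening (an analytic function of $\alpha$ may vanish at one parameter value and not at others), and the claim that ``a new gap cannot open without violating the bound'' presupposes that the first $m$ gaps stay open. The clean argument again goes through $S$: the $2m+1$ points of $S(\alpha)$ remain pairwise distinct for every $\alpha>0$ and are always periodic eigenvalues. If the $k$-th even gap ($k\le m$) closed at some $\alpha^*$, its two endpoints, which lie in $S(\alpha)$ for $\alpha<\alpha^*$, would have to collide in the limit, contradicting the distinctness of the elements of $S(\alpha^*)$; a collision with a degenerate eigenvalue outside $S$ is likewise impossible since periodic eigenvalues of a Hill operator have multiplicity at most two. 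Hence $N=m$ for all $\alpha>0$ and, by continuity of the ordered eigenvalue labeling, the open gaps are always the first $m$. With the identification step replaced by the explicit small-$\alpha$ spectrum and the continuity step tightened as above, your architecture coincides with the paper's.
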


Now we are going to show that there are many other semifinite-gap operators with potentials which can be expressed in terms of trigonometric functions. The idea is standard: to apply the Darboux transformation \cite{Darb}. The main problem is how to get non-singular potentials. 
We are going to see first what happens for small $s$ and $\alpha.$

\section{Examples and  the small $\alpha$ case}

In the first non-trivial case $s=1$ the corresponding operator (\ref{K}) simplifies to 
$$K  = 
-D^2+ 4\alpha \sin2xD$$
and clearly has an eigenfunction $\varphi_0 \equiv 1$ with the eigenvalue $\nu=0.$ The corresponding Whittaker--Hill operator 
$$L= -D^2 -(4\alpha \cos2x + 2\alpha^2\cos4x)$$
has an explicit ground state
$$\psi_0=e^{\alpha\cos2x}$$ with the eigenvalue $\lambda = -2\alpha^2.$

When $s=2$ we have
$$K  = 
-D^2+ 4\alpha \sin2x D-4\alpha \cos 2x.
$$
One can easily check that it has $\varphi_1 = \cos x$ and $\varphi_2 = \sin x$ as the eigenfunctions with $\nu_1=1-4\alpha$ and $\nu_2= 1+4\alpha$ respectively.

In the $s=3$ case  we have the matrix
\begin{equation*}
K^{0}= \left(
\begin{array}{ccc}	
		\nu 	& 8 \alpha		\\
		8 \alpha		          & \nu -4		\\
\end{array}
\right)
\end{equation*}
so we have
$$\delta^0=\nu(\nu-4)-64\alpha^2,\quad \delta^1=\nu-4$$
and the following table
\medskip
\noindent
\begin{center}
\begin{tabular}{ | c  | c | c  | c |}
\hline
\quad & Eigenvalue & \quad & Eigenfunction \\
\hline
$\nu_0$ & $2 ( 1 - \sqrt{1 + 16\alpha^2})$ &  $\varphi_0$ &  $1 + \frac{\sqrt{1+16\alpha^2}-1}{4\alpha}\cos2x$ \\
$\nu_1$ & 4 & $\varphi_1$ & $\sin 2x$ \\
$\nu_2$ & $2 ( 1 + \sqrt{1 + 16\alpha^2})$ & $\varphi_2$ & $\frac{1 - \sqrt{1+16\alpha^2}}{4\alpha} + \cos2x$ \\
\hline 
\end{tabular} 
\end{center}
 For small $\alpha$ we have
 $$\nu_0 \approx -16\alpha^2, \quad \varphi_0 \approx 1+ 2\alpha \cos 2x,$$
 $$\nu_1=4, \quad \varphi_1 = \sin 2x,$$
 $$\nu_2 \approx 4+16\alpha^2, \quad \varphi_0 \approx -2\alpha +  \cos 2x.$$

When $s=4$ the matrices are
\begin{equation*}
K^{\pm}= \left(
\begin{array}{ccc}	
		\nu - 1 \pm 8\alpha	& 12 \alpha		\\
		4 \alpha		          & \nu -9		\\
\end{array}
\right)
\end{equation*}
The eigenvalues are the zeros $\nu=\nu^{\pm}_{1,2}$ of $\delta^{\pm}=\det K^{\pm}:$
$$\delta^{\pm}(\nu) = (\nu-1\pm 8\alpha)(\nu-9)-48\alpha^2=0.$$
The corresponding eigenfunctions are respectively
$$
\varphi_{1,2}^{+}= (\nu_{1,2}^{+} -9) \cos x - 4\alpha \cos 3x,
$$
$$
\varphi_{1,2}^{-}= (\nu_{1,2}^{-} -9) \sin x - 4\alpha \sin 3x.
$$

When $\alpha \rightarrow 0$ we have the first $s$ periodic levels of the corresponding operator
$L_0= -D^2:$
\begin{eqnarray*}
\nu_0=0, \quad \varphi_0 & = & 1 \\
\nu_1=4, \quad \varphi_1 & = & \sin2x \\
\nu_2=4, \quad \varphi_2 & = & \cos2x \\
\vdots &\\
\nu_{2m-1}=4m^2, \,\, \varphi_{2m-1} & = & \sin2mx \\
\nu_{2m}=4m^2, \quad \varphi_{2m} & = & \cos2mx
\end{eqnarray*}
when $s=2m+1$ and
\begin{eqnarray*}
\nu_1=1, \quad \varphi_1 & = & \sin x \\
\nu_2=1, \quad \varphi_2 & = & \cos x \\
\vdots &\\
\nu_{2m-1}=(2m-1)^2, \,\, \varphi_{2m-1} & = & \sin (2m-1)x \\
\nu_{2m}=(2m-1)^2, \quad \varphi_{2m} & = & \cos (2m-1)x
\end{eqnarray*}
for $s=2m.$ 

Note that since we denote the eigenvalues in increasing order it follows from the interlacing property that  for odd $s$ the even eigenvalues correspond to the even eigenfunctions and odd eigenvalues to odd eigenfunctions. For even $s$ the opposite is true: odd eigenvalues correspond to even eigenfunctions and vice versa (see Appendix 1).

We are going to start with this limiting case to see the properties of the wronskians of the corresponding eigenfunctions. 

\section{Darboux transformed Whittaker--Hill operators}

We recall the following classical construction going back to Darboux \cite{Darb}.
Let $\psi_1,\dots, \psi_n$ be a set of eigenfunctions of the Schr\"odinger operator
$$L= -D^2+u(x)$$ with some eigenvalues $\kappa_1,\dots, \kappa_n$:
$$-\psi_j'' + u(x) \psi_j = \kappa_j \psi_j,\,\, j=1,\dots, n.$$
The corresponding {\it Darboux transformation} of $L$ is the operator
$\tilde L= -D^2+\tilde u(x),$ where
\begin{equation}
\label{Dar}
\tilde u = u -2D^2 \log W(\psi_1, \dots, \psi_n),
\end{equation}
where $W(\psi_1, \dots, \psi_n)$ is the wronskian of the functions $\psi_1, \dots, \psi_n.$
The key property of the new operator is that the generic solutions of the corresponding Schr\"odinger equation
$$-\tilde \psi'' + \tilde u(x) \tilde \psi = \lambda \tilde \psi$$
can be explicitly given in terms of the solutions of the initial equation
$$-\psi'' + u(x) \psi = \lambda \psi$$
by the Crum formula \cite{Crum}:
\begin{equation}
\label{Crum}
\tilde\psi = \frac{W(\psi, \psi_1, \dots, \psi_n)}{W(\psi_1, \dots, \psi_n)}.
\end{equation}
This works well for all $\lambda \neq \kappa_j, \,\, j=1,\dots n.$ 

We will need also the following information about eigenfunctions for $\lambda=\kappa_j.$ When $n=1$ the inverse
\begin{equation}
\label{inv}
\tilde \psi_1 = \frac{1}{\psi_1}
\end{equation}
 is a solution of the transformed equation with the eigenvalue $\lambda=\kappa_1.$ For $n=2$ one can check that
\begin{equation}
\label{two}
\tilde \psi_1 = \frac{\psi_2}{W(\psi_1, \psi_2)}, \quad \tilde \psi_2 = \frac{\psi_1}{W(\psi_1, \psi_2)}
\end{equation}
satisfy the transformed equation with $\lambda=\kappa_1$ and $\lambda=\kappa_2$ respectively.

Consider now the Whittaker--Hill operator with 
$$u=-(4\alpha s \cos 2x +2\alpha^2 \cos 4x)$$ with odd $s=2m+1$ and the corresponding set of the explicit eigenfunctions $\psi_0, \dots, \psi_{2m}$ from the previous section. Let $I=\{i_1,\dots, i_k\}$ be the subset of $\{1, \dots, 2m\}$ and 
$$W_I=W(\psi_{i_1}, \dots, \psi_{i_k})$$ be the corresponding wronskian.
Let us introduce also the wronskian of the corresponding functions $\varphi_j=\psi_j e^{-\alpha\cos 2x}:$
$$V_I(x) = W(\varphi_{i_1}, \dots, \varphi_{i_k}) = e^{-k\alpha\cos 2x}W_I.$$ 

We are looking for the subsets $I$ such that the corresponding Darboux transformations
\begin{equation}
\label{Darb}
\tilde u = u -2D^2 \log W_I = 4\alpha (2k-s) \cos 2x -2\alpha^2 \cos 4x -2D^2 \log V_I
\end{equation}
are non-singular. This clearly happens if and only if the wronskian $W_I$ has no real zeros on the whole line. 

We claim that this holds for the following subsets. First of all we call a cluster any pair $\{2k-1, 2k\}$ as well as a single element set $\{0\}.$
By definition, the {\it cluster subsets} $I \subset \{0,\dots, 2m\}$ are those which consist of several clusters.
In other words, the subset $I$ is called a cluster when it contains an element $2k-1$ if and only if it contains $2k$ for all $k=1, \dots, m.$ This applies also to the case of even $s=2m$, except that in that case
there is no single element clusters since $I \subset \{1,\dots, 2m\}.$

\begin{theorem} \label{theo}
For any cluster subset $I$ the corresponding wronskian $W_I(x)$ has no zeros on the real line.
\end{theorem}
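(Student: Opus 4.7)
The plan is to argue by induction on the number of clusters composing $I$, using the classical Crum--Wronskian factorization. Since $W_I = e^{k\alpha\cos 2x}V_I$ and the exponential factor is nowhere zero, the zeros of $W_I$ and $V_I$ coincide, and I will work with $W_I$.

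For the base cases of a single cluster, if $I = \{0\}$ then $W_{\{0\}} = \psi_0 = \varphi_0 e^{\alpha\cos 2x}$; writing $K$ in its self-adjoint form
\[
K \;=\; -e^{-2\alpha\cos 2x}\,D\bigl(e^{2\alpha\cos 2x}\,D\bigr) \;-\; 4(s-1)\alpha\cos 2x,
\]
one sees $\varphi_0$ is the ground state of a Sturm--Liouville operator with positive weight, hence has no zeros by classical Sturm oscillation, so $W_{\{0\}} > 0$. For a pair cluster $I = \{2k-1, 2k\}$ the functions $\varphi_{2k-1}, \varphi_{2k}$ lie in opposite parity sectors of $K$: one is odd and the other even under $x\mapsto -x$. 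Their wronskian is therefore an even, $\pi$-periodic function, and it suffices to verify nonvanishing on $[0, \pi/2]$. I would combine the identity $W' = (\lambda_{2k-1} - \lambda_{2k})\psi_{2k-1}\psi_{2k}$ with the Sturm oscillation counts (the odd eigenfunction has $k-1$ interior zeros on $(0, \pi/2)$ and the even has $k$, by applying Sturm theory in the Dirichlet and Neumann sectors respectively) to control the sign of $W$, with endpoint values $W(0) = -\varphi_{2k-1}'(0)\varphi_{2k}(0)$ and $W(\pi/2) = -\varphi_{2k-1}'(\pi/2)\varphi_{2k}(\pi/2)$ both nonzero by ODE uniqueness applied in each parity sector.

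For the inductive step, decompose $I = I'\sqcup C$ with $C$ a single cluster disjoint from a cluster subset $I'$. The Crum identity
\[
W_I \;=\; W_{I'}\cdot W\bigl(\tilde\psi_{c_1},\dots,\tilde\psi_{c_r}\bigr), \qquad \tilde\psi_{c_i} \;:=\; \frac{W(\psi_{i_1},\dots,\psi_{i_{|I'|}},\psi_{c_i})}{W_{I'}},
\]
combined with the inductive hypothesis $W_{I'}\neq 0$, makes $\tilde u = u - 2(\log W_{I'})''$ smooth and produces genuine eigenfunctions $\tilde\psi_{c_i}$ of $\tilde L = -D^2 + \tilde u$ with eigenvalues $\lambda_{c_i}$. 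A direct parity calculation using $W(f_1,\dots,f_n)(-x) = \bigl(\prod_i \epsilon_i\bigr)(-1)^{n(n-1)/2}W(f_1,\dots,f_n)(x)$ shows that $W_{I'}$ is even for any cluster subset $I'$, whence the transformed pair $\tilde\psi_{c_1},\tilde\psi_{c_2}$ inherits opposite parities within the cluster $C$. The base-case argument for pairs then applies to this transformed pair in $\tilde L$.

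The essential difficulty is the pair base case itself: proving that the wronskian of two periodic eigenfunctions of a quasi-exactly solvable Schr\"odinger operator, with distinct adjacent eigenvalues and opposite parities, has no real zeros. The special structure of Whittaker--Hill confines $\varphi_{2k-1}, \varphi_{2k}$ to a finite-dimensional invariant subspace of $K$, so that $V_{\{2k-1,2k\}}$ is a trigonometric polynomial of bounded degree whose constant Fourier mode equals its (nonzero) value at $\alpha=0$. One should then conclude either by direct estimation, showing that this constant mode dominates the oscillating ones for all $\alpha \geq 0$, or by a continuity argument in $\alpha$: since a newly appearing real zero of $W$ must be a double zero, and since $W'(x_0) = (\lambda_{2k-1}-\lambda_{2k})\psi_{2k-1}(x_0)\psi_{2k}(x_0)$ forces a common zero of $\psi_{2k-1}$ and $\psi_{2k}$ at $x_0$, such a coincidence can be excluded via Sturm comparison between the two parity sectors.
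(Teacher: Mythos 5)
Your overall architecture is the paper's: induction over clusters, the ground-state argument for $I=\{0\}$, the identity $W'=(\lambda_{2k-1}-\lambda_{2k})\psi_{2k-1}\psi_{2k}$ for a pair cluster, and the Crum quotient $W_I=W_{I'}\cdot W(\tilde\psi_{c_1},\tilde\psi_{c_2})$ for the inductive step (the parity bookkeeping you add is harmless but not needed). The problem is that the pair base case, which you correctly identify as the crux, is left with two candidate endings and neither is carried through; that is where all the content of the theorem lives.

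The first ending (the constant Fourier mode of $V_{\{2k-1,2k\}}$ dominates the oscillating ones) is not just unproved but false as a uniform statement in $\alpha$: for $s=4$, $I=\{1,2\}$ one computes $V_I(\pi/2)=(1-A)(3B-1)$, which tends to $0$ as $\alpha\to\infty$ (since $A\to 1$), while the constant mode $1+3AB$ tends to $2$; so the minimum of $|V_I|$ becomes arbitrarily small compared with the constant coefficient and no triangle-inequality domination can work. The second ending (continuity in $\alpha$) is the right one, but as stated it does not close. Two things are missing. First, $W'(x_0)=0$ only gives that one of $\psi_{2k-1}(x_0),\psi_{2k}(x_0)$ vanishes; you need to combine this with $W(x_0)=0$ and uniqueness for second-order ODEs (to exclude $\psi_{2k}(x_0)=\psi_{2k}'(x_0)=0$) before you can speak of a common zero. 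Second, and more importantly, Sturm comparison by itself does not forbid two eigenfunctions at different energies from having a common zero, so the phrase ``excluded via Sturm comparison between the two parity sectors'' is not an argument. The paper's exclusion is quantitative: at $\alpha=0$ both $\psi_{2k-1}$ and $\psi_{2k}$ have exactly $2k$ zeros per period, and this count persists for all $\alpha$ because a change in the count would force a multiple zero of a nontrivial solution; Sturm's theorem then places a zero of one eigenfunction strictly inside each of the $2k$ arcs of $\mathbb{R}/\pi\mathbb{Z}$ cut out by the zeros of the other, so a common zero would produce at least $2k+1$ zeros per period --- a contradiction with the preserved count. Without this counting step the base case, and hence the induction, is not established.
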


\begin{proof}
First of all this is obviously true for $\alpha=0$ (and hence, for small $\alpha$). Indeed, for $I=\{2k-1,2k\}$ 
we have in this case
\begin{eqnarray*}
W_I  & = & \left|
\begin{array}{ccc}	
		\sin2kx			&		\cos2kx		\\
		(2k)\cos2kx		&	(-2k)\sin2kx		\\
\end{array}
\right| \\
& = & (-2k)[\sin^22kx + \cos^22kx] = -2k,
\end{eqnarray*}
which is a constant, hence clearly non-zero for positive $k$. The same can be shown for the general cluster case. 

Now we claim that the cluster wronskian $W_I$ remains non-vanishing anywhere for all (not necessarily small) real values of $\alpha.$  We will prove this by induction on the number $k$ of elements in $I$. 

If $k=1$ then $I=\{0\}$ and $W_I=\psi_0.$ But $\psi_0$ is the ground state and therefore it has no zeroes by a general theorem (see e.g. \cite{RS}).

If $k=2$ then $I=\{2k-1,2k\}$ and $W_I=W(\psi_{2k-1},\psi_{2k}).$
 We know that $W_I (x; \alpha)$ is a smooth function of both $x$ and $\alpha$ and that it has no zeros for small $\alpha.$  Suppose that there is a positive value of $\alpha$ for which $W_I$ has at least one zero and consider the minimal value $\alpha^*$ of $\alpha$ for which is true. By definition, for $\alpha = \alpha^*$  there exists real $x=x_0$ such that
$W_I (x_0) = 0$ and $W_I' (x_0) = 0.$
We claim that this is impossible. Indeed, we have
\begin{equation*}
\label{W'}
W_I' = (\psi_{2k-1} \psi'_{2k} - \psi_{2k}\psi'_{2k-1}) ' = \psi_{2k-1}	\psi''_{2k}	 - \psi_{2k}\psi''_{2k-1} 
= (\lambda_{2k-1} - \lambda_{2k}) \psi_{2k-1} \psi_{2k}.
\end{equation*}
We see that $W_I' (x_0) =0$ implies that either $\psi_{2k-1}(x_0) = 0$ or $\psi_{2k}(x_0) = 0$. Assume without loss of generality that $\psi_{2k}(x_0) = 0.$ Then from $W_I(x_0)=0$ we must have either $\psi_{2k}'(x_0) = 0$ or  $\psi_{2k-1}(x_0)=0$. The first case is impossible since  then $\psi_{2k}$ must be identically zero as a solution of second order linear differential with zero initial data at $x_0.$ 
Suppose that $\psi_{2k-1}(x_0)=0$, which means that $\psi_{2k-1}$ and $\psi_{2k}$ have a common zero $x_0.$ We claim that this is impossible because of the following classical result.

{\bf Sturm's Theorem \cite{Ince}}. 
{\it Let $u(x)$, $v(x)$ be smooth functions on the interval $[a,b]$ such that
\[ u''(x) = G_1(x) u(x) \]
\[ v''(x) = G_2(x) v(x) \]
with $G_1 > G_2$ on $[a,b]$. Then between any two consecutive zeros of $u$ there exists a zero of $v$.}

\medskip
Applying this to the Whittaker-Hill equation $\psi'' = (u(x) - \lambda)\psi$ we see that since $\lambda_{2k-1} < \lambda_{2k}$ we must have a zero of $\psi_{2k-1}$ between any two consecutive zeros of $\psi_{2k}.$ We know that for $\alpha=0$ both $\psi_{2k}=\cos2kx$ and $\psi_{2k-1}=\sin2kx$ have $2k$ zeros on $[0,\pi)$. When $\alpha$ is increasing from zero $\psi_{2k}$ and $\psi_{2k-1}$ must maintain the number of zeros since otherwise there must be a value of $\alpha$ for which the corresponding solution $\psi$ has a multiple zero, which is impossible for a second order differential equation. But this, in combination with the coincidence of the zeros of $\psi_{2k}$ and $\psi_{2k-1},$ contradicts the interlacing property guaranteed by Sturm's theorem. This proves the one cluster case.

Suppose now we have cluster $I=\{2k-1, 2k, 2l-1, 2l\}.$ Then we can do the corresponding Darboux transformation in two steps. First we apply it to the cluster $I_k=\{2k-1, 2k\}.$ We know already that
the result is a non-singular periodic Schr\"odinger operator having the transformed eigenfunctions $\tilde\psi_{2l-1}, \tilde\psi_{2l}.$ Now we
repeat the same arguments for these eigenfunctions to show that $W(\tilde\psi_{2l-1}, \tilde\psi_{2l})$ has no real zeros. But 
$$W(\tilde\psi_{2l-1}, \tilde\psi_{2l})=\frac{W(\psi_{2k-1}, \psi_{2k}, \psi_{2l-1}, \psi_{2l})}{W(\psi_{2k-1}, \psi_{2k})},$$  so this implies that $W_I= W(\psi_{2k-1}, \psi_{2k}, \psi_{2l-1}, \psi_{2l})$ has no real zeros as well. Continuing in this way we prove the theorem.
\end{proof}

Thus for $s=2m+1$ we have constructed
$2^{m+1}-1$ new potentials corresponding to the $2^{m+1}-1$ cluster sets.
However, some of them are equivalent. For example,
the largest cluster set $I=\{0,\dots,2m\}$ corresponds to the potential $$
L^*= -D^2 + 4\alpha s \cos 2x -2\alpha^2 \cos 4x,
$$
which is just the same potential (\ref{WHo}) shifted by $x \rightarrow x+\pi/2.$ Indeed, in that case
$$V_I=W(\varphi_0,\dots, \varphi_{2m}) = W(1, \sin 2x, \cos 2x, \dots, \sin 2mx, \cos 2mx) \equiv const$$
since the linear span of $\varphi_0,\dots, \varphi_{2m}$ coincides with the whole space of trigonometric polynomials up to degree $2m.$ This is similar to the duality in the sextic case considered in \cite{GV}.

This allows us to consider only the cluster subsets $S \subset \{1,2,\dots, 2m\}$ in both $s=2m+1$ and $s=2m$ cases.
The corresponding Schr\"odinger operators have the same continuos spectrum, but different auxiliary
eigenvalues $\gamma_i,$ corresponding to the Dirichlet eigenvalue problem
$$\psi(0)=\psi(\pi)=0.$$ Since all our potentials are even these eigenvalues coincide with one of the ends of the corresponding gap. For the initial Whittaker--Hill operator (\ref{WHo}) with $s=2m+1$ the first even $$\gamma_{2i}=\lambda_{2i-1}, \, i=1,\dots, m$$ coincides with the left end of the gap, since we know that the corresponding eigenfunction is odd. When $s=2m$ the first odd
$$\gamma_{2i-1}=\lambda_{2i}, \, i=1,\dots, m$$ coincide with the right end of the gaps since in that case $\psi_{2i}$ are odd. Note that in the limit $s\rightarrow \infty, \alpha \rightarrow 0$ such that
$s\alpha \rightarrow A/4$ we have the Mathieu operator with $u(x) = -A \cos 2x,$ so this gives us the position of all $\gamma_i$ in the gaps in that classical case.

When we apply the Darboux transformation to the cluster $\{2i-1,\, 2i\}$, as one can see from the formulas (\ref{Crum}) and (\ref{two}), that the parity of all eigenfunctions is preserved except the corresponding $\psi_{2i-1},\, \psi_{2i}$. This means that the corresponding $\gamma_i$ switched the side of the gap, while all other remain in the same position.

Thus we have
\begin{theorem} \label{invariance}
For any integer $s$ and any cluster $k$-element subset $I \subset \{1, \dots, 2m\},\, m=[s/2]$ the Schr\"odinger operator 
$$
L_I= -D^2 + 4\alpha (2k-s) \cos 2x -2\alpha^2 \cos 4x -2D^2 \log V_I(x)
$$
is a non-singular periodic operator, having the same semifinite-gap spectrum as the Whittaker--Hill operator (\ref{WHo}), but different positions of the Dirichlet eigenvalues in the gaps determined by the set $I.$ 
\end{theorem}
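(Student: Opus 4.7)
The theorem makes three assertions — regularity of $L_I$, equality of the band structure with that of the Whittaker--Hill operator, and displacement of the Dirichlet eigenvalues indexed by $I$ — which I would verify in turn. Regularity is immediate: by Theorem \ref{theo} the cluster wronskian $W_I$ has no real zeros, so $\log|W_I|$ is smooth on $\mathbb{R}$ and the Darboux transform $\tilde u = u - 2D^2 \log W_I$ is well defined and smooth. The explicit form given for $L_I$ in the statement follows by writing $W_I = e^{k\alpha\cos 2x} V_I$, so that $-2D^2 \log W_I = 8k\alpha\cos 2x - 2D^2 \log V_I$, and combining this with the original $-4\alpha s\cos 2x$ term of $u$ to produce the coefficient $4\alpha(2k-s)$.

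For periodicity I would show that $W_I$ is itself $\pi$-periodic. Within each cluster $\{2i-1, 2i\}$, Propositions \ref{termination} and \ref{multiplicity} imply that $\psi_{2i-1}$ and $\psi_{2i}$ share the same Floquet factor under $x \mapsto x+\pi$: they are $\pi$-periodic when $s$ is odd, $\pi$-antiperiodic when $s$ is even. A wronskian of $k = 2c$ such functions therefore transforms under $x \mapsto x+\pi$ by $(\pm 1)^{2c} = 1$, so $W_I$ is $\pi$-periodic in both cases, making $L_I$ a non-singular $\pi$-periodic Schr\"odinger operator.

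To match the band structure I would combine the Crum formula (\ref{Crum}) with the preceding periodicity. For every $\lambda$ outside the finite set $\{\lambda_{i_1}, \ldots, \lambda_{i_k}\}$ the assignment $\psi \mapsto \tilde\psi = W(\psi, \psi_{i_1}, \ldots, \psi_{i_k})/W_I$ is a linear isomorphism between the two-dimensional solution spaces of $L\psi = \lambda\psi$ and $L_I \tilde\psi = \lambda\tilde\psi$. A Bloch solution $\psi$ of $L$ with Floquet multiplier $\mu$ is mapped to a solution with multiplier $\mu \prod_\ell \mu_{i_\ell}$, and since the cluster multipliers multiply to $(\pm 1)^{2c} = 1$, Floquet multipliers are preserved. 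Boundedness is preserved too because $W_I$ is $\pi$-periodic and bounded away from zero. Hence the two Hill discriminants $\Delta_L(\lambda)$ and $\Delta_{L_I}(\lambda)$ agree off a finite set and, being entire in $\lambda$, agree everywhere, giving identical bands, gaps, and the same semifinite-gap closure pattern.

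For the Dirichlet eigenvalues I would use identity (\ref{two}) inside each cluster $\{2i-1, 2i\} \subset I$, which produces $\tilde\psi_{2i-1} = \psi_{2i}/W(\psi_{2i-1}, \psi_{2i})$ and $\tilde\psi_{2i} = \psi_{2i-1}/W(\psi_{2i-1}, \psi_{2i})$ as eigenfunctions of $L_I$ at the two cluster eigenvalues. Since the potential is even, one of $\psi_{2i-1}, \psi_{2i}$ is even and the other odd, and the wronskian of an even and an odd solution is itself even, so passing through (\ref{two}) exchanges the parities inside each cluster. For $j \notin I$, a parity count for the general Crum ratio — each differentiation toggles parity, the cluster factors combine with the row-sign contributions from an odd number of rows to give overall $+1$, and $W_I$ is even — shows that $\tilde\psi_j$ carries the same parity as $\psi_j$. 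Since for an even potential the Dirichlet eigenvalues on $[0,\pi]$ are exactly the periodic or antiperiodic eigenvalues with odd eigenfunctions, this parity swap flips the Dirichlet eigenvalue in each gap indexed by $I$ from one end to the other, while all remaining gaps keep their Dirichlet ends unchanged. I expect the parity bookkeeping to be the most delicate point, but it reduces to the one-cluster case by iterating the Darboux construction, exactly as in the proof of Theorem \ref{theo}.
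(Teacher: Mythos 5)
Your proposal is correct and follows essentially the same route as the paper --- regularity from Theorem \ref{theo}, isospectrality via the Crum formula (\ref{Crum}), and the Dirichlet-eigenvalue swap via the parity exchange coming from (\ref{two}) --- though you supply considerably more detail (the $\pi$-periodicity of $W_I$ via Floquet factors, the identity of the Hill discriminants) than the paper's rather terse justification. One small imprecision: in the Crum ratio the factor $\prod_\ell \mu_{i_\ell}$ appears in both the numerator $W(\psi,\psi_{i_1},\dots,\psi_{i_k})$ and the denominator $W_I$, so the Floquet multiplier of $\tilde\psi$ is exactly $\mu$ without any need to invoke $\prod_\ell \mu_{i_\ell}=1$.
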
 

Thus for any integer $s$ we have constructed $2^m-1, \, m=[s/2]$  new non-singular isospectral deformations of Whittaker--Hill equations with potentials expressible in terms of trigonometric functions. 
Some examples and graphs of the new potentials are given in Appendix 2.

A remarkable fact is that cluster sets realize all possible combinations of the positions of the Dirichlet spectrum in the open gaps, so this gives all corresponding even semifinite-gap operators. 
This implies that the converse to the Theorem \ref{theo} is also true.

\begin{corollary}
If $W_I(x)$ has no real zeros then $I$ must be a cluster subset.
\end{corollary}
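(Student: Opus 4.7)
The plan is to prove the contrapositive using the completeness statement from the paragraph just before, together with an analytic continuation in $\alpha$. Assume $I$ is not a cluster subset but $W_I$ has no real zeros; I derive a contradiction.

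Under this hypothesis $L_I = -D^2+\tilde u_I$ is a non-singular even $\pi$-periodic Schr\"odinger operator isospectral to $L$. Evenness follows from the parity identity $V_I(-x)=(-1)^{p+|I|(|I|-1)/2}V_I(x)$, where $p$ counts the odd-indexed entries of $I$ (using that $\varphi_{2k-1}$ is odd and $\varphi_{2k}$ even), together with the fact that a nowhere-zero function cannot be odd. Isospectrality with $L$ is the standard preservation of the continuous spectrum by Darboux transformations in band-edge Bloch eigenfunctions. Hence by the completeness assertion $L_I = L_J$ for some cluster subset $J$. Using the explicit formula $L_I = -D^2+4\alpha(2|I|-s)\cos 2x - 2\alpha^2\cos 4x - 2D^2\log V_I$ from Theorem \ref{invariance}, this equality forces $|I|=|J|$ (by matching the $\cos 2x$ coefficient) and $D^2\log(V_I/V_J)=0$; boundedness of the trigonometric-polynomial-like $V_I,V_J$ on $\mathbb{R}$ then integrates to $V_I = c V_J$ at the given $\alpha$. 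Since $J$ is locally constant under small perturbations of $\alpha$ (by continuity of the Dirichlet configuration), the identity $V_I(x,\alpha)=c(\alpha)V_J(x,\alpha)$ extends by analyticity to all $\alpha\in\mathbb{R}$.

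The contradiction comes at $\alpha=0$. For the cluster $J$, the Vandermonde formula on the paired exponentials $e^{\pm 2ik_jx}$ shows that $V_J|_{\alpha=0}$ is a nonzero constant, so $V_I|_{\alpha=0}$ must be constant as well. However, for non-cluster $I=C\sqcup S$ (with $C$ the maximal cluster subset of $I$ and $S$ a nonempty set of lonely indices), iterated Darboux factorization --- together with the observation that at $\alpha=0$ a cluster Darboux of $-D^2$ is again $-D^2$, with each transformed eigenfunction a nonzero multiple of the original --- expresses $V_I|_{\alpha=0}$ as a nonzero constant times the free Wronskian of a lonely trigonometric set $\{\sin 2k_jx$ or $\cos 2k_jx : j\in S\}$. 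Expanded in complex exponentials, the top-frequency term $e^{2i(\sum_{j\in S}k_j)x}$ carries a nonzero Vandermonde coefficient, so this Wronskian cannot be constant --- contradiction.

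The main obstacle is the completeness statement invoked above (that every non-singular even semifinite-gap operator with the continuous spectrum of $L$ is some $L_J$) together with the analytic extension of the proportionality $V_I = c(\alpha)V_J$ down to $\alpha=0$; once these are granted, the remaining computation at the free point is elementary.
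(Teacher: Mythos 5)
Your proposal shares the paper's backbone but is substantially more detailed at the crucial final step. The paper's own proof is two sentences: the spectral data of the non-singular operator $L_I$ (continuous spectrum plus Dirichlet eigenvalues sitting at gap ends) must coincide with one of the $2^m$ configurations realized by cluster sets, and then ``by the uniqueness theorem $I$ should be one of the cluster sets.'' You use exactly the same two inputs --- the completeness claim and inverse-spectral uniqueness --- to reach $L_I=L_J$ for a cluster set $J$, and you are right to flag these as the real (and in this paper unproven) load-bearing assumptions. What you add, and what the paper silently skips, is the implication $L_I=L_J\Rightarrow I$ is a cluster set: a priori a non-cluster $I$ might accidentally reproduce a cluster potential. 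Your degeneration to $\alpha=0$, where $V_J$ is a nonzero constant by the exponential Wronskian/Vandermonde computation while $V_I$ for non-cluster $I$ reduces (via iterated Darboux factorization) to a manifestly non-constant Wronskian of ``lonely'' trigonometric functions with distinct frequencies, closes this gap cleanly. So your route is the paper's route plus a genuinely new final argument that the paper arguably needs.

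One step as stated would not survive scrutiny, though it is easily repaired. You cannot get $|I|=|J|$ by ``matching the $\cos 2x$ coefficient,'' because $D^2\log V_I$ is not a finite trigonometric polynomial and in general contributes its own $\cos 2x$ Fourier component (see e.g.\ the explicit potential \eqref{first}). The correct argument: $L_I=L_J$ gives $2D^2\log(V_I/V_J)=8\alpha\left(|I|-|J|\right)\cos 2x$; integrating twice and using periodicity to kill the linear term yields $V_I=c\,V_J\,e^{-\alpha(|I|-|J|)\cos 2x}$, and since $V_I$ and $V_J$ are finite trigonometric polynomials the exponential factor must be trivial, forcing $|I|=|J|$ and $V_I=cV_J$. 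With that repair, and granting the completeness and uniqueness statements at the same level of rigour as the paper does, your argument is correct; the analytic continuation of $V_I=c(\alpha)V_J$ down to $\alpha=0$ also needs the (true) observations that $W_I$ remains zero-free for nearby $\alpha$ by compactness and that the eigenfunctions extend analytically through $\alpha=0$ within each parity class, but these are routine.
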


Indeed, the spectral data (including the Dirichlet eigenvalues) for the corresponding potentials must coincide with one of the described above, and hence by the uniqueness theorem $I$ should be one of the cluster sets.

\section{Appendix 1: interlacing property of eigenvalues}

Recall that $n \times n$ matrix is called a {\it Jacobi matrix} if it is tri-diagonal with positive off-diagonal elements. Such matrices play an important role in the theory of orthogonal polynomials \cite{Sze}.

Let $J_n$ be an $n \times n$ Jacobi matrix
\begin{equation}
\label{jn}
J_n = 
\begin{pmatrix}
		b_0	&	c_0	&	0	&	\dots		&	\dots	&	0 	\\
		a_1	&	b_1	&	c_1	&	0		&	\dots	&	0	\\
		0	&	a_2	&	b_2	&	c_2		&	\dots	&	0	\\
		\vdots & \ddots	& \ddots 	& 	\ddots	&	\ddots	&	\vdots	\\
		0	&	\dots	&	0	&	a_{n-2}	&	b_{n-2} & c_{n-2} \\
		0	&	\dots & \dots	&	0		&	a_{n-1}	&	b_{n-1} \\
\end{pmatrix}
\end{equation}
and $J_{n-1}$ be its $(n-1)\times(n-1)$ submatrix
\begin{equation}
\label{od}
J_{n-1} = 
\begin{pmatrix}
		b_0	&	c_0	&	0	&	\dots		&	\dots	&	0 	\\
		a_1	&	b_1	&	c_1	&	0		&	\dots	&	0	\\
		0	&	a_2	&	b_2	&	c_2		&	\dots	&	0	\\
		\vdots & \ddots	& \ddots 	& 	\ddots	&	\ddots	&	\vdots	\\
		0	&	\dots	&	0	&	a_{n-3}	&	b_{n-3} & c_{n-3} \\
		0	&	\dots & \dots	&	0		&	a_{n-2}	&	b_{n-2} \\
\end{pmatrix}
\end{equation}

\begin{theorem} \label{char_int}
The eigenvalues of $J_n$ and $J_{n-1}$ are simple and interlacing. 
\end{theorem}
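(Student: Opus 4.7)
The plan is to prove the theorem by induction on $n$ using the three-term recurrence satisfied by the characteristic polynomials $p_n(\lambda) := \det(\lambda I - J_n)$. Expanding $p_n$ along its last row yields
$$p_n(\lambda) = (\lambda - b_{n-1})\, p_{n-1}(\lambda) - a_{n-1} c_{n-2}\, p_{n-2}(\lambda),$$
with $p_0 \equiv 1$ and $p_1(\lambda) = \lambda - b_0$. The decisive feature, inherited from the Jacobi hypothesis, is that the coefficient $a_{n-1} c_{n-2}$ is strictly positive; each $p_n$ is also monic of degree $n$.

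I claim by induction that the roots of $p_n$ are real and simple and that the roots of $p_{n-1}$ strictly interlace them. For the base case $n=2$, the polynomial $p_1$ has the single root $b_0$, and since $p_2(b_0) = -a_1 c_0 < 0$ while $p_2(\lambda) \to +\infty$ as $\lambda \to \pm\infty$, $p_2$ has two simple real roots straddling $b_0$. For the inductive step, let $\mu_1 < \cdots < \mu_{n-1}$ be the roots of $p_{n-1}$ and $\nu_1 < \cdots < \nu_{n-2}$ the roots of $p_{n-2}$, with $\nu_k \in (\mu_k,\mu_{k+1})$ by the inductive hypothesis. Because $p_{n-2}$ is monic with these simple real roots, the values $p_{n-2}(\mu_k)$ are all nonzero with signs alternating as $k$ runs from $n-1$ down to $1$ (in particular $p_{n-2}(\mu_{n-1}) > 0$, since $\mu_{n-1}$ lies to the right of all zeros of $p_{n-2}$). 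Substituting $\lambda = \mu_k$ into the recurrence gives $p_n(\mu_k) = -a_{n-1}c_{n-2}\, p_{n-2}(\mu_k)$, so $p_n$ is itself nonzero and sign-alternating at the $\mu_k$'s; the intermediate value theorem then produces a root of $p_n$ in each of the $n-2$ intervals $(\mu_k,\mu_{k+1})$. Finally, since $p_n(\mu_{n-1}) < 0$ while $p_n(\lambda) \to +\infty$ as $\lambda \to +\infty$, there is an additional root in $(\mu_{n-1}, +\infty)$; comparing in the same way the sign of $p_n(\mu_1)$ with the behaviour $p_n(\lambda)\to(-1)^n \infty$ as $\lambda \to -\infty$ furnishes a further root in $(-\infty, \mu_1)$. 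This accounts for all $n$ roots of the monic polynomial $p_n$, which must therefore be real, simple, and strictly interlaced by the $\mu_k$.

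Simplicity of the eigenvalues of $J_n$ falls out of the induction as a byproduct, since one exhibits $n$ distinct real roots of a degree-$n$ polynomial. I do not expect a serious obstacle; the only bookkeeping issue is tracking the alternating signs of the monic polynomial $p_{n-2}$ at the interlacing points $\mu_k$, which is entirely elementary. As an alternative route one could first symmetrize $J_n$ via the diagonal conjugation $\tilde{J}_n = D^{-1} J_n D$ with $d_i / d_{i-1} = \sqrt{a_i / c_{i-1}}$ (well-defined by positivity of the off-diagonal entries), producing a genuinely symmetric tridiagonal matrix with positive off-diagonal, and then invoke Cauchy's interlacing theorem for principal submatrices together with the standard simplicity statement for irreducible symmetric Jacobi matrices; the polynomial argument above is, however, fully self-contained and avoids any external appeal beyond the intermediate value theorem.
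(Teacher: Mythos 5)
Your proof is correct, and it takes a genuinely different route from the paper's. Both arguments start from the same three-term recurrence for the characteristic polynomials, but from there they diverge: the paper proves a Christoffel--Darboux-type summation identity for the $\Delta_k$ and specializes it at $x=y$ to obtain the global Wronskian inequality $\Delta_{k+1}'(x)\Delta_k(x)-\Delta_k'(x)\Delta_{k+1}(x)>0$ for all real $x$, from which interlacing follows by examining signs at consecutive zeros of $\Delta_k$; you instead run a direct induction, evaluating $p_n$ at the zeros $\mu_k$ of $p_{n-1}$ (where the middle term of the recurrence drops out, leaving $p_n(\mu_k)=-a_{n-1}c_{n-2}\,p_{n-2}(\mu_k)$) and harvesting $n$ sign changes from the inductive interlacing of $p_{n-2}$ against $p_{n-1}$ together with the end behaviour of the monic polynomial. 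Your sign bookkeeping checks out: $p_{n-2}(\mu_k)$ has sign $(-1)^{n-1-k}$, so $p_n$ alternates at the $\mu_k$, and the two boundary roots in $(-\infty,\mu_1)$ and $(\mu_{n-1},+\infty)$ bring the count to exactly $n$, which forces reality, simplicity, and strict interlacing simultaneously. What each approach buys: yours is more elementary and self-contained (nothing beyond the intermediate value theorem, and algebraic simplicity is automatic from exhibiting $n$ distinct roots, whereas the paper's opening remark about one-dimensional kernels only gives geometric simplicity); the paper's identity is stronger in that the pointwise positivity of the Wronskian expression holds everywhere, immediately ruling out common zeros of consecutive $\Delta_k$ and yielding simplicity of all zeros without induction, and the identity itself is a standard tool in orthogonal-polynomial theory that the paper's broader context naturally invokes. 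One minor index remark: your coefficient $a_{n-1}c_{n-2}$ is the correct product of off-diagonal entries (the paper writes $c_{k-1}a_{k-2}$, an immaterial slip since only positivity of the product is used).
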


\begin{proof}

The proof of the simplicity of spectrum of Jacobi matrices is well-known: it follows immediately from the tri-diagonal form of the linear system $(J_n-\lambda I_n) v=0$ that for given eigenvalue $\lambda$ the corresponding eigenspace is one-dimensional.

Then by $\Delta_k(\lambda)$ we denote the characteristic polynomial for the corresponding $k \times k$ submatrix $J_k:$  
$$\Delta_k(\lambda) \equiv \det( \lambda I_k - J_k).$$ By definition we take $\Delta_0 \equiv 1.$
We are going to prove that the zeroes of $\Delta_{k+1}$ and $\Delta_{k}$ are interlacing. More precisely,
we will show that if  $\lambda_1 < \lambda_2 < \ldots < \lambda_{k+1}$ be the zeros of $\Delta_{k+1} (\lambda)$ then each interval $[\lambda_i , \lambda_{i +1}]$, $i = 1,2, \ldots, {k}$ contains exactly one zero of $\Delta_{k}(\lambda)$. 
 
We have the recurrence relation
$$
\Delta_k (\lambda) \equiv \det (\lambda I_k - J_k) = (\lambda - b_{k-1}) \Delta_{k-1}(\lambda) - c_{k-1} a_{k-2} \Delta_{k-2}(\lambda)  \qquad \textrm{for all $k \geq 2$},
$$ 
which is of the form
\begin{equation} \label{recurr}
\Delta_k (\lambda) =  (A_k\lambda + B_k) \Delta_{k-1}(\lambda) - C_k \Delta_{k-2}(\lambda)
\end{equation}
with $A_k = 1 > 0,$ $B_k=-b_{k-1}$ and $C_k = c_{k-1} a_{k-2} > 0$ since $J_k$ is Jacobi. We claim that the following identity holds
\begin{eqnarray} \label{iden1}
\lefteqn{ \gamma_0 \Delta_0(x) \Delta_0(y) + \gamma_1 \Delta_1(x) \Delta_1(y) + \ldots  + \gamma_k \Delta_k(x) \Delta_k(y) } \nonumber \\
& &  = \frac{\Delta_{k+1}(x) \Delta_k(y) - \Delta_k(x) \Delta_{k+1}(y)}{x-y} 
\end{eqnarray}
where $\gamma_i = C_{i+2} C_{i+3} \ldots C_{k+1}$ and $\gamma_k = 1$. This follows directly from the recurrence relation \eqref{recurr}:
\begin{eqnarray*}
\lefteqn{ \Delta_{k+1}(x) \Delta_k(y) -\Delta_k(x) \Delta_{k+1}(y) } \\
& = & \{ (A_{k+1}x + B_{k+1}) \Delta_{k}(x) - C_{k+1} \Delta_{k-1}(x)\} \Delta_k(y) \\
&  & -\Delta_k(x)  \{ (A_{k+1}y + B_{k+1}) \Delta_{k}(y) - C_{k+1} \Delta_{k-1}(y)\}  \\
& = &  A_k (x-y) \Delta_k(x) \Delta_k(y) + C_{k+1} \{\Delta_k(x) \Delta_{k-1}(y) - \Delta_{k-1}(x) \Delta_k(y)  \}
\end{eqnarray*}
Recalling that $A_k = 1$, this becomes
\begin{eqnarray*}
\lefteqn{ \frac{ \Delta_{k+1}(x) \Delta_k(y) -\Delta_k(x) \Delta_{k+1}(y) } {x-y}} \\
& & =  \Delta_k(x) \Delta_k(y) + C_{k+1} \frac{ \{\Delta_k(x) \Delta_{k-1}(y) - \Delta_{k-1}(x) \Delta_k(y) \}}{x-y}
\end{eqnarray*}
By induction, we obtain \eqref{iden1}. We notice that, for the special case $x=y$, we have
\begin{eqnarray*}
\lefteqn{ \gamma_0 \{\Delta_0(x)\}^2 + \gamma_1 \{ \Delta_1(x) \}^2 + \ldots  + \gamma_k \{ \Delta_k(x)\}^2 }  \\
& &  = \Delta_{k+1}'(x) \Delta_k(x) - \Delta_k'(x) \Delta_{k+1}(x)
\end{eqnarray*}
Since all $C_i>0$, it follows that for all $j$ we have $\gamma_j >0$. Hence,
\begin{equation} \label{pos}
\Delta_{k+1}'(x) \Delta_k(x) - \Delta_k'(x) \Delta_{k+1}(x) > 0
\end{equation}
because $\Delta_0 (x) \equiv 1$ and is therefore positive.

\medskip

Now, if $\xi$ and $\eta$ are two consecutive zeros of $\Delta_k(x)$, we have $\Delta_k'(\xi)\Delta_k'(\eta) < 0$. On the other hand, (\ref{pos}) yields $-\Delta_k'(\xi) \Delta_{k+1}(\xi) > 0$,  $-\Delta_k'(\eta) \Delta_{k+1}(\eta) > 0$, so that $\Delta_{k+1}(\xi) \Delta_{k+1}(\eta) <0$. This indicates an odd number of zeros of $\Delta_{k+1}(x)$ in the interval $\xi < x < \eta$. In particular, there is at least one. Now let $\xi = x_k$ be the greatest zero of $\Delta_k(x)$; then $\Delta_k'(\xi) > 0$, and (\ref{pos}) yields $\Delta_{k+1}(\xi) < 0$. Since $\Delta_{k+1}(b)$ is positive, we obtain at least one zero of $\Delta_{k+1}(x)$ on the right of $\xi = x_k$, and similarly at least one on the left of the least zero $x_1$ of $\Delta_k(x)$. Consequently, we can only have one zero of $\Delta_{k+1}(x)$ between $x_{\nu}$ and $x_{\nu +1}$, $\nu = 1,2, \ldots, k$.
By interchanging the roles of $\Delta_k(x)$ and $\Delta_{k+1}(x)$, we can prove that there exists one, and only one, zero of $\Delta_k(x)$ between every zero of $\Delta_{k+1}(x)$. 
\end{proof}

Applying this result to the matrix (\ref{ev}) we have as  a corollary that the periodic eigenvalues of Whittaker--Hill operator with $s=2m+1$ from the sets $S_0$ and $S_1$ are interlacing.

To prove a similar result for even $s$ and anti-periodic spectrum we should modify the arguments. Note that the corresponding matrices $K^{\pm}$  have the same size $m\times m$ and differ only in the first element (see (\ref{sev0})). By the Weinstein-Aronszajn
formula \cite{Kato} their characteristic polynomials $\Delta^{\pm} = \det (\lambda I_m - K^{\pm}_m)$ are related by
\begin{equation*} 
\Delta^+ =  \Delta^{-} -4\alpha s \Delta_1,
\end{equation*}
where $\Delta_1$ is the characteristic polynomial of the corresponding submatrix
\begin{equation}
\begin{pmatrix}
		b_2  &	c_2	&	0	\dots		&	\dots	&	0 	\\
		a_3	&	b_3	&	c_3	&		&	\dots	&	0	\\
				\vdots & \ddots	&    & 	\ddots	&	\ddots	&	\vdots	\\
		0	&	\dots	&	0	&	a_{m-1}	&	b_{m-1} & c_{m-1} \\
		0	&	\dots & \dots	&	0		&	a_{m}	&	b_{m} \\
\end{pmatrix}
\end{equation}
By theorem \ref{char_int} the zeros of   $\Delta^{-}$ and   $\Delta_1$ are interlacing, and hence the same is true for the zeros of $\Delta^{-}$ and   $\Delta^{+}.$ 
Thus we have

\begin{theorem}
The periodic eigenvalues of the Whittaker--Hill operator with $s=2m+1$ from the sets $S_0$ and $S_1$ are interlacing. The same is true  for the sets $S^+$ and $S^-$ for even $s$ and anti-periodic eigenfunctions.
\end{theorem}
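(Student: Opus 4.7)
The theorem has two parts. For the periodic case $s=2m+1$, I would observe that $K^1_m$ in \eqref{od} is precisely the principal submatrix of $K^0_m$ in \eqref{ev} obtained by deleting the first row and column. Since $\alpha>0$ all off-diagonal entries are positive (including the slightly modified $a_1=4\alpha(s-1)$, which is positive for $s\geq 2$), so both matrices are Jacobi and Theorem \ref{char_int} applies directly, yielding the interlacing of $S_0$ and $S_1$.

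For the anti-periodic case $s=2m$, the matrices $K^+_m$ and $K^-_m$ have the same size $m\times m$ and agree except in the $(1,1)$ entry, which differs by $4\alpha s$. Expanding the determinants along the first row produces the identity
$$\Delta^+(\nu)\;=\;\Delta^-(\nu)-4\alpha s\,\Delta_1(\nu),$$
where $\Delta_1$ is the characteristic polynomial of the $(m-1)\times(m-1)$ principal submatrix cut from the lower-right corner. That submatrix is again Jacobi, so Theorem \ref{char_int} gives interlacing of the zeros of $\Delta^-$ and $\Delta_1$.

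From this I would deduce interlacing of $\Delta^+$ and $\Delta^-$ by a sign chase. Writing the zeros of $\Delta^-$ as $\lambda_1<\cdots<\lambda_m$, the interlacing with $\Delta_1$ forces $\Delta_1(\lambda_i)$ to alternate in sign along $i=1,\ldots,m$, and the identity above then gives $\Delta^+(\lambda_i)=-4\alpha s\,\Delta_1(\lambda_i)$ alternating as well. This produces a zero of $\Delta^+$ in each open interval $(\lambda_i,\lambda_{i+1})$, accounting for $m-1$ of its $m$ zeros. Comparing signs of $\Delta^+$ at $\lambda_1$ and $\lambda_m$ against its leading behaviour at $\pm\infty$ (both polynomials being monic of degree $m$) pins down the remaining zero on one side of $[\lambda_1,\lambda_m]$, and the resulting arrangement is the required interlacing.

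The only potentially fiddly step is this final sign check: one must verify that the ``extra'' zero of $\Delta^+$ genuinely escapes the convex hull of the $\lambda_i$ rather than duplicating one of the already-found interior zeros. Everything else is routine Jacobi-matrix bookkeeping together with the Weinstein--Aronszajn-type rank-one identity.
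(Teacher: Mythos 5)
Your argument is essentially the paper's own: the periodic case is exactly the application of Theorem \ref{char_int} to $K^0_m$ and its principal submatrix $K^1_m$, and the anti-periodic case uses the same rank-one (Weinstein--Aronszajn) identity $\Delta^+=\Delta^--4\alpha s\,\Delta_1$ followed by interlacing of $\Delta^-$ with $\Delta_1$. Your concluding sign chase merely spells out the step the paper compresses into ``and hence the same is true,'' and it goes through since both characteristic polynomials are monic of degree $m$.
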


Note that since $\alpha >0$ the set $S^-$ is shifted to the right compared to $S^+$, so the eigenvalues corresponding to the even eigenfunctions are smaller than their odd counterparts.

\section{Appendix 2: Examples and graphs}

We give first the explicit form of the Darboux transformed Whittaker--Hill potentials
$$v_I=4\alpha (2k-s) \cos 2x -2\alpha^2 \cos 4x -2D^2 \log V_I(x)$$
in the simplest cases.

The first new example comes from $s=3$ and the cluster set $I=\{0\}.$ It corresponds to the ground state
$$\psi_0=(1 + C(\alpha)\cos2x)e^{\alpha\cos2x}, \quad C(\alpha)= \frac{\sqrt{1+16\alpha^2}-1}{4\alpha}$$ 
with eigenvalue $\nu_0=2( 1 - \sqrt{1 + 16\alpha^2}).$ 
The transformed potential has the following form
$$v_0=-4\alpha \cos 2x -2\alpha^2 \cos 4x -2D^2 \log (1 + C(\alpha)\cos2x),$$
or, more explicitly,
\begin{equation} \label{first}
v_0=-4\alpha \cos 2x -2\alpha^2 \cos 4x+\frac{8 C(\alpha)(C(\alpha)+\cos 2x)}{(1 + C(\alpha)\cos2x)^2}.
\end{equation}
The corresponding Schr\"odinger operator is periodic, non-singular and has the ground state
$$\tilde\psi_0=\frac{e^{-\alpha\cos2x}}{1 + C(\alpha)\cos2x}.$$

When $s=4$ we can take cluster $I=\{1,2\}.$ The corresponding anti-periodic eigenfunctions are
$$\varphi_1= \sin x +A(\alpha) \sin 3x, \quad A= \frac{\sqrt{1-2\alpha+4 \alpha^2} +\alpha -1}{3 \alpha},$$
$$\varphi_2= \cos x +B(\alpha) \cos 3x, \quad B= \frac{\sqrt{1+2\alpha+4 \alpha^2} -\alpha -1}{3 \alpha}.$$
The corresponding wronskian (up to a sign) is
$$V_I = 1+3AB +2(A+B) \cos 2x + (B-A) \cos 4x$$
and the new potential is
\begin{equation} \label{sec}
v_{12}= -2\alpha^2 \cos 4x- 2D^2 \log [1+3AB +2(A+B) \cos 2x + (B-A) \cos 4x].
\end{equation}

The corresponding graphs as well as some of the transformed potentials in the case when $s=5$ are shown in Fig. 3 and 4.  These were created using a programme written in Maple. 

\begin{figure}
\centerline{ \includegraphics[width=6cm]{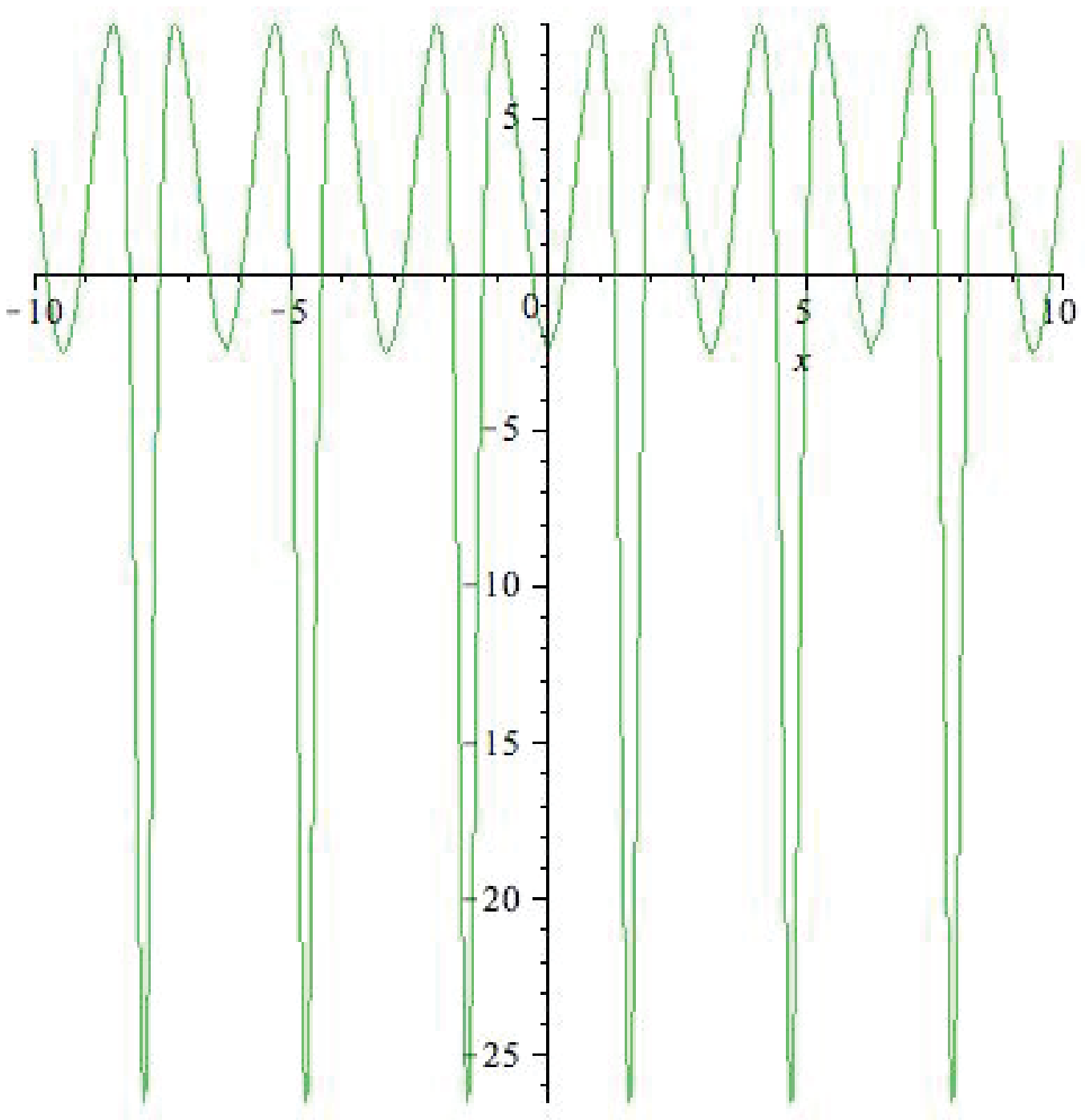} \hspace{10pt} \includegraphics[width=6cm]{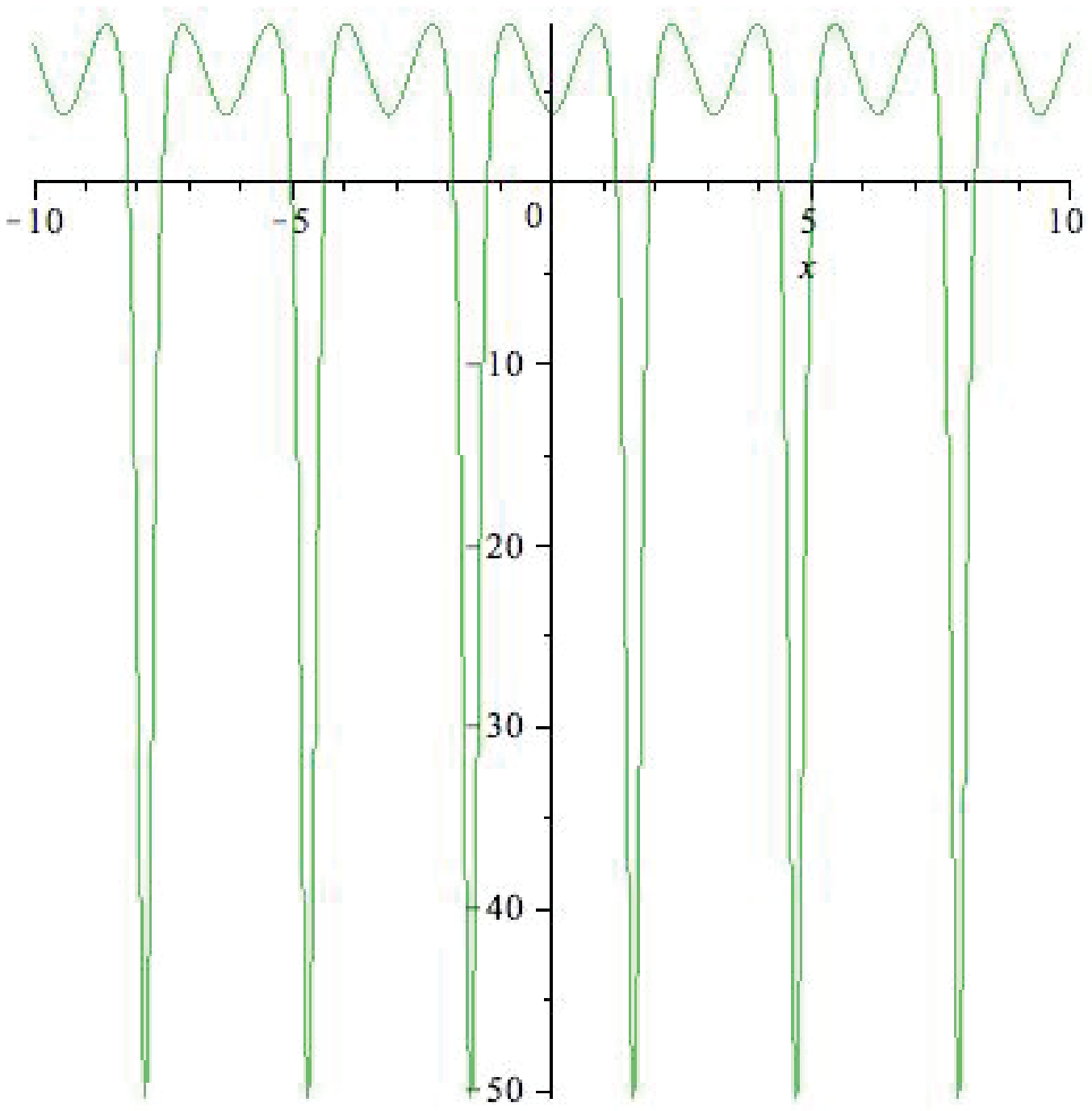} }
\caption{New semifinite-gap potentials. Left: $s=3,\,  I=\{0\}, \, \alpha=1.$ Right: $s=4, \, I=\{1,2\}, \, \alpha=1.$}\label{newex}
\end{figure}
\begin{figure}
\centerline{ \includegraphics[width=6cm]{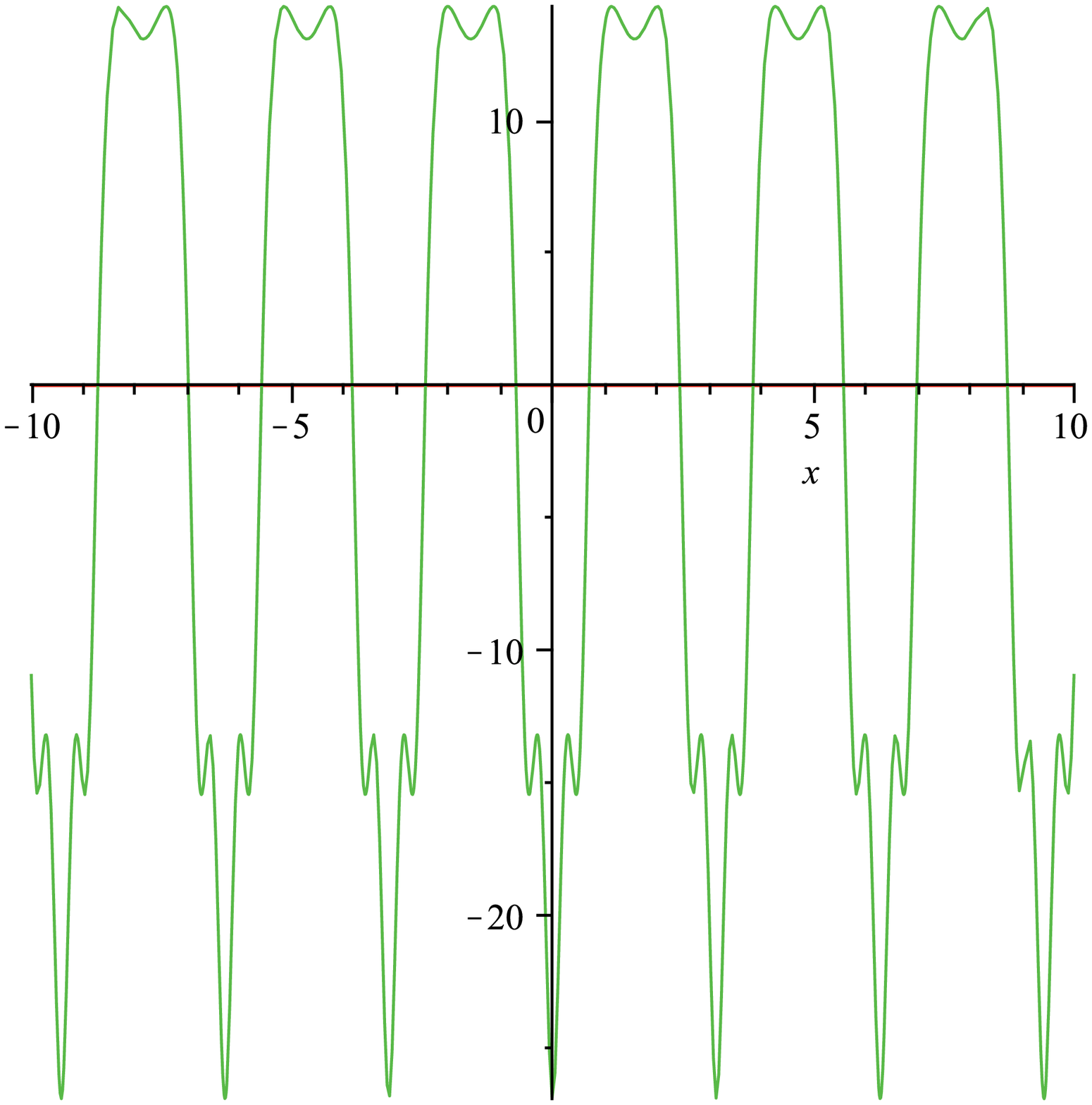} \hspace{10pt} \includegraphics[width=6cm]{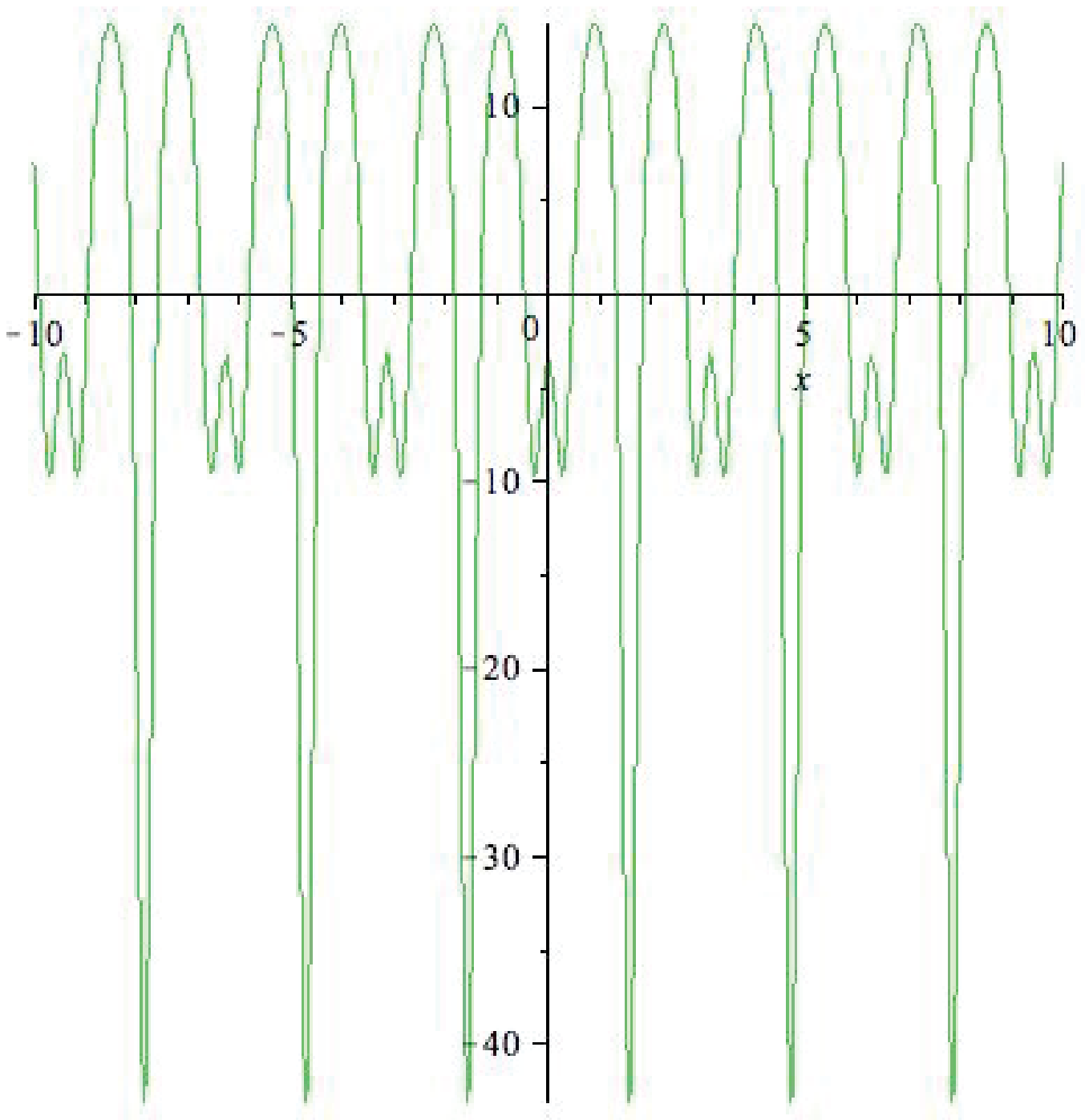} }
\caption{New semifinite-gap potentials. Left: $s=5,\,  I=\{3,4\},\,  \alpha=1.$ Right: $s=5, \, I=\{0, 3,4\}, \,\alpha=1.$} \label{semi}
\end{figure}

\section{Concluding remarks.} This paper was motivated by our attempts to study the integrability of the quantum Neumann system. In the simplest case we just have the quantum pendulum described by the 
Mathieu equation with $u(x) = A \cos 2x$. It is well-known that this equation can be considered as a limit of the Lam\`e equation with potential $u(x)=s(s+1)\wp(x),$
when the parameter $s$ goes to infinity and the elliptic curve degenerates to the rational one (see Section 15.5 in \cite{Erd}). It seems to be surprising that the elliptic deformation turned out to be simpler than the original trigonometric version. 

What we have shown in this paper (and what had actually been discovered a long time ago by Whittaker \cite{Whit}) is that one need not consider elliptic generalisations but instead simply add the second harmonic to the potential in order to see some integrability. Indeed, one can consider the Mathieu case 
as a limit of the Whittaker--Hill equation (\ref{W-H}) when $s \rightarrow \infty,\, \alpha \rightarrow 0$ such that $\alpha s \rightarrow A.$

An interesting feature of the Whittaker--Hill case is the appearance of the hierarchy of the orthogonal trigonometric polynomials. Indeed, from the orthogonality of the eigenfunctions of the Whittaker--Hill operator (\ref{WHo}) it follows for every odd $s=2m+1$ that the corresponding eigenfunctions $\varphi_k(x), \, k=0,\dots, 2m$  of the operator (\ref{K}) form a basis in the $s$-dimensional space $V_m$ of trigonometric polynomials of degree $\leq m$, which is {\it orthogonal} with respect to the scalar product
\begin{equation}
(\varphi_1, \varphi_2)= \int_0^{\pi} \varphi_1(x) \varphi_2(x) e^{2\alpha\cos2x} dx.
\end{equation} 
After the change of coordinate $z=\cos 2x$ the even eigenfunctions $\varphi_{2l}(x)=p_l(\cos 2x)$ become the usual polynomials, which are orthogonal with respect to the measure
\begin{equation}
\label{mes}
d\mu(z) = \frac{e^{2\alpha z}}{\sqrt{1-z^2}}.
\end{equation}
However, in contrast to the usual case \cite{Sze} they have {\it the same degree} $m$ if $\alpha\neq 0.$ 
When $\alpha=0$ we have the classical Chebyshev polynomials: $\varphi_{2l}(x)=\cos 2lx= T_l(\cos 2x).$ The usual orthogonal polynomials with respect to the measure (\ref{mes}) have been recently studied by Basor, Chen and Ehrhardt in \cite{BCT}. Some non-standard orthogonal polynomials are discussed by Gomez-Ullate, Kamran and Milson in the recent paper \cite{GKM}.

The situation here is similar to the paper \cite{GV}, where another quasi-exactly solvable case of sextic potential $u= x^6-(m+1)x^2$ was considered. The corresponding Darboux transformed operators have trivial monodromy in the complex domain and interesting geometry of the singular sets.
It is worthy to look at these properties in the Whittaker-Hill case as well.

\section{Acknowledgments.} We are grateful to J. Gibbons, E. Korotyaev and S.P. Novikov for stimulating discussions and useful comments.

This work has been partially supported by EPSRC (grant EP/E004008/1) and by the
European Union through the FP6 Marie Curie RTN ENIGMA (contract
number MRTN-CT-2004-5652) and through ESF programme MISGAM.


\begin{thebibliography}{99}
 \bibitem{BCT}
E. Basor, Y. Chen, T. Ehrhardt {\it Painlev\`e-V and time dependent Jacobi polynomials.} 
arXiv:0905.2620.
\bibitem{BD}
C.M. Bender, G.V. Dunne {\it Quasi-Exactly Solvable Systems and Orthogonal Polynomials}, J. Math. Phys. {\bf 37} (1996), no. 1, 6--11.
\bibitem{Crum}
M. M. Crum, {\it Associated Sturm-Liouville equations},
Quart. J. Math. Oxford {\bf 6}, 121, (1955).
\bibitem{Erd}
{\it Higher Transcendental Functions}, ed. by A. Erd\'{e}lyi, (McGraw-Hill, New York, 1953), Vol. II, Chap. 10.
\bibitem{Darb}
G. Darboux {\it Sur une proposition r\`elative aux \'equations lin\`eaires.} C.R.Acad.Sci.,Paris {\bf 94} (1882), p.1456.
\bibitem{DM}
P. Djakov, B. Mityagin {\it Asymptotics of instability zones of the Hill operator with a two term potential},  J. Funct. Anal.  242  (2007),  no. 1, 157--194. 
\bibitem{FGR}
F. Finkel, A. Gonz‡lez-L—pez, M. A. Rodr'guez {\it On the families of orthogonal polynomials associated to the Razavy potential.}  J. Phys. A  32  (1999),  no. 39, 6821--6835.
\bibitem{GV}
J. Gibbons, A.P. Veselov {\it On the rational monodromy-free potentials with sextic growth.} J. Math. Phys. {\bf 50}, 013513 (2009).
\bibitem{GKM}
D. Gomez-Ullate, N. Kamran, R. Milson {\it An extended class of orthogonal polynomials defined by a Sturm-Liouville problem.} arXiv:0807.3939
\bibitem{Ince}
E.L. Ince {\it A linear differential equation with periodic coefficients.} Proc. Lond. Math. Soc. (2), {\bf 23} (1923), 56-74.
\bibitem{IM}
A.R. Its, V.B. Matveev {\it Hill operators with a finite number of lacunae.}  Funkt. Anal. Appl.  {\bf 9}  (1975), no. 1, 69--70. 
\bibitem{Kato}
T. Kato {\it Perturbation Theory for Linear Operators.} Springer, Berlin-Heidelberg-New York, 1995.
\bibitem{L}
A. Liapounoff {\it Sur une s\'erie dans la th\'eorie des equations diff\'erentielles lin\'eares du second ordre \`a coefficients periodiques.} Zap. Imp. Akad. Nauk. Fiz.-Mat. Otd. {\bf 13} (1902), no. 2.
\bibitem{MW}
W. Magnus and S. Winkler, {\it Hill's equation}, Interscience Publishers, John Wiley, 1969.
\bibitem{N}
S.P. Novikov {\it A periodic problem for the Korteweg-de Vries equation. I. }  Funkt. Anal. Appl. {\bf 8} (1974), no. 3, 54--66.
\bibitem{R}
M. Razavy {\it An exactly soluble Schr\"odinger equation with a bistable potential.} Am. J. Phys. {\bf 48(4)} (1980), 285-288.
\bibitem{RS}
M. Reed and B. Simon {\it Methods of Modern Mathematical Physics, IV: Analysis of Operators},
Academic Press, 1978.
\bibitem{Sze}
G. Szego { \it Orthogonal polynomials}, American Mathematical Society Colloquium Publications, Vol XXIII, 1959.
\bibitem{T}
A.V. Turbiner, {\it Quasi-exactly-solvable problems and ${\rm sl}(2)$ algebra} Commun. Math. Phys., {\bf118}, 467-474, (1988).
\bibitem{Ush}
A. G. Ushveridze, {\it Quasi-Exactly Solvable Models in Quantum Mechanics} (Institute of Physics, Bristol, 1993)
\bibitem{Whit}
E.T. Whittaker, {\it On a class of differential equations whose solutions satisfy integral equations,} Proc. Edinb. Math. Soc. {\bf 33} (1914), 14-23.
\bibitem{WW} 
E.~T.~Whittaker and G.~N.~Watson {\it A Course of Modern Analysis}, Cambridge University Press, 1963.




\end{thebibliography}
\end{document}